\newcommand{\leqnomode}{\tagsleft@true}
\newcommand{\reqnomode}{\tagsleft@false}
\newtheoremstyle{myStyle}
{\topsep}
{\topsep}
{}
{}
{\bfseries}
{.}
{5pt}
{}
\theoremstyle{myStyle}
\newtheorem{myDef}{Definition}[section]
\newtheorem{thm}[myDef]{Theorem}
\newtheorem*{unthm}{Theorem}
\newtheorem{cor}[myDef]{Corollary}
\newtheorem{rmrk}[myDef]{Remark}
\newtheorem{prop}[myDef]{Proposition}
\newcommand{\norm}[1]{\left\lVert#1\right\rVert}
\newcommand{\pn}[1]{\left(#1\right)}
\newcommand{\N}{\mathbb{N}}
\newcommand{\normed}{(X,\norm{\cdot})}
\newcommand{\dnormed}{(X^*,\norm{\cdot})}
\DeclareMathOperator{\conv}{conv}
\DeclareMathOperator{\ant}{ant}
\numberwithin{equation}{section}
\title{\textbf{Antipodal Sets in infinite dimensional Banach spaces}}
\author{E. Glakousakis and S. Mercourakis}
\date{}
\begin{document}
\maketitle
\begin{abstract}
The following strengthening of the Elton-Odell theorem on the existence of a $(1+\epsilon)-$separated sequences in the unit sphere $S_X$ 
of an infinite dimensional Banach space $X$ is proved: There exists an infinite subset $S\subseteq S_X$ and a constant $d>1$, satisfying the 
property that for every $x,y\in S$ with $x\neq y$ there exists $f\in B_{X^*}$ such that $d\leq f(x)-f(y)$ and $f(y)\leq f(z)\leq f(x)$, for 
all $z\in S$.
\end{abstract}
\setcounter{section}{-1}
\section{Introduction} A set $S$ in a Banach space $X$ is called $d-$separated $(d>0)$ if $\norm{x-y}\geq d$ $\forall\ x\neq y\in S$. For 
infinite dimensional Banach spaces the parameter $K(X)=\sup\{d:\exists S\subseteq B_X, S\text{ infinite and $d-$separated}\}$ is called 
Kottman's constant or separation constant of $X$ and by a well known Theorem of Elton and Odell [8] is strictly greater than 1. In the present 
paper we study the parameter $K_a(X)$ for infinite dimensional Banach spaces, which was introduced on [17] as $\ant(X)$. The definition of 
$K_a(X)$ is based on the notion of bounded and separated antipodal sets [17]. Bounded and separated antipodal sets were introduced as a 
strengthening of the classical concept of antipodal sets (see [5] and [18]) to include spaces of any dimension whereas the original definition 
was suitable for spaces of finite dimension [18]. We remind the above definitions.
\begin{myDef}[{[18]}]
A subset of an $n-$dimensional real vector space $X$ is said to be antipodal if for every $x,y\in S$ with $x\neq y$ there exist distinct 
parallel support hyperplanes $P,Q$ such that $x\in P$ and $y\in Q$.
\end{myDef}
\begin{myDef}[{[17]}]
Let $(X,\norm{\cdot})$ be a normed space.
\begin{enumerate}[label=(\alph*)]
\item A subset $S$ of $X$ is called antipodal if for every $x,y\in S$ with $x\neq y$ there exists $f\in X^*$ such that $f(x)<f(y)$ and 
$f(x)\leq f(z)\leq f(y)$ $\forall z\in S$.
\item An antipodal subset $S$ of $X$ is said to be bounded and separated, in short b.s.a subset, if there exist positive constants $c_1,c_2$ 
and $d$ such that
\begin{enumerate}[label=(\arabic*)]
\item$\norm{x}\leq c_1$, $\forall x\in S$ and
\item for every $x,y\in S$ with $x\neq y$ there exists $f\in X^*$ with $\norm{f}\leq c_2$, such that $0<d\leq f(y)-f(x)$ and 
$f(x)\leq f(z)\leq f(y)$ $\forall z\in S$.
\end{enumerate}
A subset $S$ of a normed space $X$, as above, will be called $(c_1,c_2,d)-$b.s.a subset of $X$.
\item If $X$ is infinite dimensional we set \[K_a(X)=\sup\{d:\exists\ S\subseteq B_X,S\text{ infinite }(1,1,d)-\text{b.s.a set}\}.\]
\end{enumerate}
\end{myDef}
It is clear that if $(X,\norm{\cdot})$ is finite dimensional then Definition 0.1 and Definition 0.2 (a) coincide.

In relation with Elton-Odell Theorem it would be interesting to know if every infinite dimensional Banach space $X$ contains an infinite bounded and 
separated antipodal subset with constants $c_1=c_2=1$ and $d>1$ or equivalently if $K_a(X)>1$, for every infinite dimensional Banach space. 
Indeed it is obvious that in that case we would have a stronger version of Elton-Odell Theorem. The above question was posed in [17] and our 
main aim is to provide an affirmative answer. For spaces that contain isomorphically $c_0$ or $l_p$ for some $1\leq p<\infty$ suffice the 
structural properties of these spaces (Proposition 1.2) while for uniformly smooth spaces only the geometric properties of those spaces are needed 
(Proposition 1.4). The main tool in order to pass to more general classes of spaces is Theorem 1.7 which proof is essentially based on the proof 
of Theorem 1 of [17]. Using Theorem 1.7 we prove that $K_a(X)>1$ when $X$ is reflexive Banach space (Corollary 1.8) or $X$ has separable dual 
(Corollary 1.9). For the general case apart from Theorem 1.7 is also needed the highly non trivial Theorem 4.1 of [9]. If $X$ is any (real) 
Banach space then $B_X$ (resp. $S_X$) denotes its closed unit ball (resp. unit sphere). The Banach-Mazur distance between two isomorphic 
Banach spaces $X$ and $Y$ is defined as
\[d(X,Y)=\inf\{\norm{T}\norm{T^{-1}}:T\text{ is an invertible operator from $X$ onto $Y$}\}.\]
\section{Bounded and separated antipodal sets in infinite dimensions.}From here on we concern ourselves with infinite dimensional Banach spaces, 
except stated otherwise. We start this section with some remarks concerning bounded and separated antipodal sets (see [17]).
\begin{rmrk}
Let $(X,\norm{\cdot})$ be a Banach space.
\begin{enumerate}[label=(\arabic*)]
\item Let $S$ be a bounded and separated antipodal subset of $X$. It is easy to see that if $\lambda>0$, $S$ is also bounded and separated with 
constants $c_1,\lambda c_2,\lambda d$ and the 
same is valid for the set $\lambda S=\{\lambda x:x\in S\}$ with constants $\lambda c_1,c_2,\lambda d$. Thus a bounded and separated 
antipodal subset of $X$ can be defined as a subset $S$ of $B_X$ that satisfies the property: there exists $d>0$ such that for every $x\neq y\in S$
there is $f\in B_{X^*}$ with $d\leq f(y)-f(x)$ and $f(x)\leq f(z)\leq f(y)$, for every $z\in S$.
\item Let $S$ be a $\lambda-$equilateral ($\norm{x-y}=\lambda$ for every $x\neq y\in S$) subset of $X$. Then the set $S$ is a 
$(M,1,\lambda )-$b.s.a subset of $X$, where $M=\sup\{\norm{x}:x\in S\}$.
\item Let $\{(x_\gamma,x_\gamma^*):\gamma\in\Gamma\}$ be a bounded biorthogonal system in $X$ with $M\geq\norm{x_\gamma}\norm{x_\gamma^*}$, 
for every $\gamma\in\Gamma$. We consider the biorthogonal 
system $\{(y_\gamma,y_\gamma^*):\gamma\in\Gamma\}$ with $y_\gamma=\frac{x_\gamma}{\norm{x_\gamma}}$ and $y_\gamma^*=\norm{x_\gamma}{x_\gamma^*}$, $\gamma\in\Gamma$. 
Then the minimal system $\{y_\gamma:\gamma\in\Gamma\}$ is a $(1, M, 1)-$b.s.a subset of $X$.
\item The Elton-Odell Theorem states that: If $\dim X=\infty$, then there exists a $(1+\epsilon)-$
separated sequence in $S_X$. Therefore $K(X)>1$. Since it is apparent that $K(X)\leq2$ we get that $1<K(X)\leq2$.
\item Since every infinite dimensional Banach space $X$ contains an infinite Auerbach system, 
that is, a biorthogonal system $\{(x_n,x_n^*):n\in\mathbb{N}\}$ 
such that $\norm{x_n}=\norm{x_n^*}=1$, for $n\in\mathbb{N}$, (see [6] and [11] Th. 1.20) by (3) we get that $K_a(X)\geq1$. Also we have that 
$K_a(X)\leq K(X)$.
\item Let $Y$ be a subspace of $X$, then $K(Y)\leq K(X)$ and $K_a(Y)\leq K_a(X)$.
\end{enumerate}
\end{rmrk}
In the next Proposition, which strengthens Theorem 3 of [12], it is proved that if a Banach space $X$ contains isomorphicaly 
$c_0$ or $l_p$ for some $1\leq p\leq\infty$, then $K_a(X)>1$.

\begin{prop}
Let $X$ be a Banach space. If $X$ contains isomorphicaly $c_0$ or $l_1$, then $K_a(X)=2$. If $X$ contains $l_p$ for some $1<p<\infty$, then 
$K_a(X)\geq2^{1/p}$.
\end{prop}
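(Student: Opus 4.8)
The plan is to produce, inside $X$, an infinite $(1,1,d)$-b.s.a.\ set whose constant $d$ is as close to $2$ as we wish when $X$ contains $c_0$ or $\ell_1$, and as close to $2^{1/p}$ as we wish when $X$ contains $\ell_p$, $1<p<\infty$; since Remark~1.1(4)--(5) give $K_a(X)\le K(X)\le 2$, the first claim follows from $K_a(X)\ge 2$ and the second is immediate. Everything thus reduces to manufacturing, for each $\varepsilon>0$, a good b.s.a.\ set in $X$.

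First I would record explicit equilateral --- hence, by Remark~1.1(2), b.s.a.\ --- sets in the three model spaces, each with a concrete family of separating functionals. In $\ell_1$ the unit vector basis $\{e_n\}$ is $2$-equilateral, and for $n\neq m$ the functional $e_n^*-e_m^*\in B_{\ell_\infty}$ equals $1$ at $e_n$, $-1$ at $e_m$, and $0$ at every other $e_k$. In $c_0$ one takes $x_n=e_n-\sum_{k<n}e_k$: then $\norm{x_n}_\infty=1$ and, for $n<m$, $x_n-x_m=2e_n+e_{n+1}+\dots+e_{m-1}-e_m$, so the set is $2$-equilateral in $S_{c_0}$, and $e_n^*\in B_{\ell_1}$ has value $1$ at $x_n$, $-1$ at $x_m$, and a value in $\{-1,0\}$ at every remaining $x_k$ (use $-e_n^*$ for the opposite ordering). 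In $\ell_p$, $1<p<\infty$, the basis $\{e_n\}$ is $2^{1/p}$-equilateral, with separating functional $2^{-1/q}(e_n^*-e_m^*)\in B_{\ell_q}$ ($1/p+1/q=1$), whose values are $\pm 2^{-1/q}$ at $e_n,e_m$ and $0$ elsewhere, so that its oscillation over the set is $2^{1-1/q}=2^{1/p}$. Thus $\ell_1$ and $c_0$ carry infinite $(1,1,2)$-b.s.a.\ sets, and $\ell_p$ an infinite $(1,1,2^{1/p})$-b.s.a.\ set.

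The transfer step is routine. If $T$ maps a model space $E$ isomorphically onto a subspace of $X$ with $a\norm{v}_E\le\norm{Tv}\le b\norm{v}_E$, where $b/a$ is the Banach--Mazur distance of the copy, put $u_n=Tw_n$ for the model b.s.a.\ set $(w_n)$ and, for each separating functional $f$, let $g=f\circ T^{-1}$ extended to $X$ with the same norm $\le a^{-1}$ by Hahn--Banach; then $u_n/b\in B_X$, the functionals $ag$ lie in $B_{X^*}$, and $(ag)(u_k/b)=\tfrac{a}{b}\,f(w_k)$, so $\{u_n/b\}$ is a $(1,1,\tfrac{a}{b}d_0)$-b.s.a.\ subset of $X$, $d_0$ being the model oscillation. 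Hence the transferred constant equals $d_0$ divided by the Banach--Mazur distance of the chosen copy, and it remains to choose copies whose distance is close to $1$. For $c_0$ and $\ell_1$ this is precisely James's non-distortion theorem: every isomorphic copy of $c_0$ (resp.\ $\ell_1$) contains a $(1+\varepsilon)$-isomorphic copy for each $\varepsilon>0$; hence $K_a(X)\ge 2/(1+\varepsilon)$ for all $\varepsilon$, so $K_a(X)\ge 2$, and with the upper bound noted above $K_a(X)=2$.

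The genuine difficulty is $\ell_p$ with $1<p<\infty$: this space is distortable, so an isomorphic copy of it in $X$ need \emph{not} contain any $(1+\varepsilon)$-isomorphic copy, and the naive transfer yields only $2^{1/p}/C$, useless when $C$ is large. Here I would appeal to Krivine's theorem, which furnishes, for each $n$ and $\delta>0$, successive normalised blocks of a basic sequence spanning the copy that are $(1+\delta)$-equivalent to the $\ell_p^n$-basis (the block-representable $\ell_r$ must be $\ell_p$, since every normalised block is uniformly $\ell_p$). One then splices such blocks --- taken with pairwise disjoint supports in a fixed (necessarily shrinking) basic sequence, with $n\to\infty$ and $\delta\to 0$ --- into a single infinite set: inside one block the $\{e_i\}$-type separating functionals extend to $X$ with norm $\le 1+\delta$, while a cross-block pair is separated by a functional assembled from data living on its two blocks. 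The crux --- and the only point where real bookkeeping beyond the model computation is needed --- is to keep simultaneously the whole infinite set in $B_X$, all separating functionals in $B_{X^*}$, and the separation of every pair above $2^{1/p}-\varepsilon$; granting this, letting $\varepsilon\to 0$ gives $K_a(X)\ge 2^{1/p}$. (For the weaker conclusion $K_a(X)>1$ mentioned in the introduction the splicing can be carried out far more crudely.)
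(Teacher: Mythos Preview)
Your treatment of the $c_0$ and $\ell_1$ cases is correct and essentially identical to the paper's: both use James's non-distortion theorem to pass to a $(1+\varepsilon)$-isomorphic copy and then transfer the explicit $(1,1,2)$-b.s.a.\ set.

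For $\ell_p$ with $1<p<\infty$, however, there is a genuine gap. You correctly identify the obstruction --- $\ell_p$ is distortable, so no $(1+\varepsilon)$-copy need exist --- but your Krivine-plus-splicing remedy does not close it. The phrase ``a cross-block pair is separated by a functional assembled from data living on its two blocks'' hides the whole difficulty, and you concede as much with ``granting this''. Concretely: if $f$ is a norm-$(1+\delta)$ functional coming from one Krivine block (via Hahn--Banach or via a basis projection) and $g$ is the analogous functional from a second block, you have no relation between the two blocks beyond disjointness of supports, so $\|f-g\|$ is only bounded by $\|f\|+\|g\|\approx 2$; the candidate $2^{-1/q}(f-g)$ then has norm about $2^{1/p}$, not $1$, and the separation constant collapses to roughly $1$ rather than $2^{1/p}$. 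No amount of bookkeeping repairs this without extra structural information linking the blocks.

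The paper's route is both simpler and complete: it observes that a \emph{two-sided} $(1+\varepsilon)$-equivalence is unnecessary. The remarks following Proposition~2.e.3 in Lindenstrauss--Tzafriri yield, for any equivalent norm $\|\cdot\|'$ on $\ell_p$, a $\|\cdot\|'$-normalised block basic sequence $(x_k)$ satisfying only the \emph{lower} estimate
\[
\Bigl\|\sum_k a_k x_k\Bigr\|'\ \ge\ \frac{1}{1+\varepsilon}\Bigl(\sum_k |a_k|^p\Bigr)^{1/p}.
\]
This one-sided inequality alone bounds the biorthogonal functionals from above:
\[
\|x_k^*-x_l^*\|'=\sup\Bigl\{|a_k-a_l|:\Bigl\|\sum_n a_n x_n\Bigr\|'\le 1\Bigr\}\le (1+\varepsilon)\,2^{1/q},
\]
so $g_{kl}=2^{1/p-1}(x_k^*-x_l^*)$ has $\|g_{kl}\|'\le 1+\varepsilon$ while $g_{kl}(x_k)-g_{kl}(x_l)=2^{1/p}$ and $g_{kl}(x_m)=0$ otherwise. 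Thus $\{x_k\}$ is $(1,1+\varepsilon,2^{1/p})$-b.s.a., hence $(1,1,2^{1/p}/(1+\varepsilon))$-b.s.a., and letting $\varepsilon\to 0$ gives $K_a(X)\ge 2^{1/p}$. The point you missed is that distortion obstructs the upper $\ell_p$-estimate, but only the lower one is needed to control the dual side.
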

\begin{proof}
We start by observing that the set $\left\{\sum_{k=1}^ne_k-e_{n+1}:n\in\mathbb{N}\right\}$ is normalized and 
2-equilateral in $c_0$ and the set $\{e_n:n\in\N\}$ is normalized and $2^{1/p}-$equilateral in $l_p$, $1\leq p<\infty$. 
We may assume that $(X,\norm{\cdot})\equiv c_0$ or $l_p$, $1\leq p<\infty$ and it suffices to prove the 
conclusion for an equivalent norm $\norm{\cdot}'$ on $X$. We tackle separately the cases $\normed\equiv c_0$ and $\normed\equiv l_p$, $1<p<\infty$. 
The case $\normed\equiv l_1$ can be proved with any of the two 
ways that will be presented, so it is excluded. Our proof is based essentially 
on the proof of James non-distortion Theorem and the remarks that follow its 
proof in [14] Prop. 2.e.3 from where we take that: Let $\normed\equiv c_0$ or $l_p$, $1\leq p<\infty$ and $\norm{\cdot}'$ an equivalent norm on $X$. 
Then for every $\epsilon>0$ there exists a block basic sequence $(x_k)$ of the canonical basis of $X$, $(e_n)$, such that
\begin{gather*}
\norm{x_k}'=1,\ k\in\N\text{ and}\\
\norm{\sum_{k=1}^\infty a_kx_k}'\geq\frac{1}{1+\epsilon}\norm{(a_k)}\text{ for every }(a_k)\subseteq X.
\end{gather*}
Moreover, if $\normed\equiv c_0$ or $l_1$ we have that
\[(1+\epsilon)\norm{(a_k)}\geq\norm{\sum_{k=1}^\infty a_kx_k}'\text{, for every }(a_k)\subseteq X.\]
We note that the space $Z=[x_k]$ is isomorphic to $X$ since the bases $(x_k)$ and $(e_k)$ are equivalent. 
With $(x_k^*)$ we denote the biorthogonal functionals of $(x_k)$.

Let now $\normed\equiv l_p$, $1<p<\infty$, $\epsilon>0$ and $(x_k)$ a block basic sequence of $(e_n)$ such that
\begin{gather*}
\norm{x_k}'=1,\ k\in\N\text{ and}\\
\norm{\sum_{k=1}^\infty a_kx_k}'\geq\frac{1}{1+\epsilon}\left(\sum_{k=1}^\infty|a_k|^p\right)^{1/p}\text{, for every }(a_k)\subseteq X.
\end{gather*}
We will show that the set $S=\{x_k:k\in\N\}$ is a $(1,1+\epsilon,2^{1/p})-$b.s.a. subset of $Z$. Indeed, we have that,
\begin{align*}
\norm{x_k^*}'&=\sup\left\{|a_k|:\norm{\sum_{n=1}^\infty a_nx_n}'\leq1\right\}\\
&\leq\sup\left\{|a_k|:\left(\sum_{n=1}^\infty|a_n|^p\right)^{1/p}\leq1+\epsilon\right\},\ k\in\N
\end{align*}
and for $k,l\in\N$ with $k\neq l$,
\begin{align*}
\norm{x_k^*-x_l^*}'&=\sup\left\{|a_k-a_l|:\norm{\sum_{n=1}^\infty a_nx_n}'\leq1\right\}\\
&\leq\sup\left\{|a_k-a_l|:\left(\sum_{n=1}^\infty|a_n|^p\right)^{1/p}\leq1+\epsilon\right\}\\
&=(1+\epsilon)2^{1/q}\text{, where }\frac{1}{p}+\frac{1}{q}=1.
\end{align*}
For $k,l\in\N$ with $k\neq l$ we set $g_{kl}=2^{(1/p)-1}(x_k^*-x_l^*)\in Z^*$, then
\begin{gather*}
\norm{g_{kl}}'=2^{(1/p)-1}\norm{x_k^*-x_l^*}\leq2^{(1/p)-1}(1+\epsilon)2^{1/q}=(1+\epsilon)\text{ and}\\
g_{kl}(x_k)=2^{(1/p)-1},\ g_{kl}(x_m)=0,\ g_{kl}(x_l)=-2^{(1/p)-1},\ m\not\in\{k,l\}.
\end{gather*}
Thus $S$ is a bounded and separated antipodal subset of $Z$ with constants $c_1=1$, $c_2=1+\epsilon$, $d=g_{kl}(x_k)-g_{kl}(x_l)=2\cdot2^{1/p-1}=2^{1/p}$.

From (1) of Remark 1.1 now it is direct that $S$ is a $\left(1,1,\frac{2^{1/p}}{1+\epsilon}\right)-$b.s.a. subset of $Z$, so we get that 
$K_a(X)\geq 2^{1/p}$.

For the case $\normed\equiv c_0$ we consider $\epsilon>0$ and $(x_k)$ a block basic sequence of $(e_n)$ such that
\begin{gather*}
\norm{x_k}'=1,\ k\in\N\text{ and}\\
\begin{align*}
&(1+\epsilon)\norm{(a_k)}_\infty\geq\norm{\sum_{k=1}^\infty a_kx_k}'\geq\frac{1}{1+\epsilon}\norm{(a_k)}_\infty\text{, for every choice}\\ 
&\text{of scalars, }(a_k)\text{, tending to zero.}
\end{align*}
\end{gather*}
For $n\in\N$,
\begin{gather*}
\norm{\sum_{k=1}^n x_k-x_{n+1}}'\leq(1+\epsilon)\norm{\sum_{k=1}^ne_k-e_{n+1}}=(1+\epsilon)\text{ and}\\
\begin{align*}
\norm{x_n^*}'&=\sup\left\{|a_n|:\norm{\sum_{k=1}^\infty a_kx_k}'\leq1\right\}\\
&\leq\sup\left\{|a_n|:\norm{\sum_{k=1}^\infty a_kx_k}\leq1+\epsilon\right\}\leq(1+\epsilon).
\end{align*}
\end{gather*}
We set $y_n=\frac{1}{1+\epsilon}\left(\sum_{k=1}^nx_k-x_{n+1}\right)$
and $y_n^*=\frac{1}{1+\epsilon}x_n^*$. We will show that 
the set $S=\left\{y_n:n\in\N\right\}$ 
is a $\left(1,1,\frac{2}{(1+\epsilon)^2}\right)-$b.s.a subset of $(X,\norm{\cdot}')$. Indeed for $m<n$ and $k\in\N$ it is easy to see that
\[-\frac{1}{(1+\epsilon)^2}=y_{m+1}^*(y_m)\leq y_{m+1}^*(y_k)\leq y_{m+1}^*(y_n)=\frac{1}{(1+\epsilon)^2}.\]
Since $\epsilon>0$ was arbitrary we get the coclusion.
\end{proof}
\begin{rmrk}
\begin{enumerate}[label=(\arabic*)]
\item From a classical result we know that $K(l_p)=2^{1/p}$, $1<p<\infty$, (see [4]). Thus $K_a(l_p)\leq K(l_p)=2^{1/p}$. On the other hand 
the canonical basis $\{e_n:n\in\N\}$ of $l_p$ is a 
normalized $2^{1/p}-$equilateral subset of $l_p$, so $K_a(l_p)\geq 2^{1/p}$ and consequently $K_a(l_p)=2^{1/p}$.
\item If $1<p<\infty$ and $X\cong l_p$ it is not valid that $K_a(X)=2^{1/p}$. There exists a renorming $\norm{\cdot}'$ of $l_2$ such that 
$K_a(l_2,\norm{\cdot}')\geq\sqrt{3}>\sqrt{2}$ (see [15]). 
Also in Proposition 1.13 it is proved that for any Banach space $X$ there exists an 
equivalent norm such that $K_a(X,\norm{\cdot}')=2$.
\end{enumerate}
\end{rmrk}
\begin{prop}
Let $X$ be a uniformly smooth Banach space, then $K_a(X)>1$.
\end{prop}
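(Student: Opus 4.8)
\section*{Proof proposal for Proposition 1.4}

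The plan is to build the required infinite $(1,1,d)$-b.s.a. set with $d>1$ out of an infinite Auerbach system of $X$, which exists by (5) of Remark 1.1. Recall from (3) of Remark 1.1 that any Auerbach system $\{(y_n,g_n):n\in\N\}$ already produces a $(1,1,1)$-b.s.a. set: for $k\neq l$ the functional $\tfrac12(g_l-g_k)$ has norm $\leq1$, vanishes at every $y_m$ with $m\notin\{k,l\}$, and satisfies $\tfrac12(g_l-g_k)(y_l)-\tfrac12(g_l-g_k)(y_k)=1$. Since $\norm{\tfrac12(g_l-g_k)}=\tfrac12\norm{g_l-g_k}$, the separation constant can be pushed strictly above $1$ as soon as the system can be chosen with $\sup_{k\neq l}\norm{g_k-g_l}<2$: one simply rescales this functional to norm $1$. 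This is the only place where uniform smoothness is used: $X$ is reflexive, and its dual $X^*$ is uniformly convex (see, e.g., [14]); write $\delta=\delta_{X^*}$ for the modulus of convexity of $X^*$, so that $\delta(\epsilon)>0$ for all $\epsilon\in(0,2]$.

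The heart of the argument is thus the following extraction from an arbitrary infinite Auerbach system $\{(y_n,g_n)\}$ of $X$: after passing to a subsequence, $\sup_{k\neq l}\norm{g_k-g_l}\leq2\theta$ for some $\theta=\theta(X)<1$. Since $(g_n)$ is bounded in the reflexive space $X^*$, it has a subsequence converging weakly to some $g\in X^*$, and I would split into two cases. If $g\neq0$, fix (using reflexivity) $\xi\in S_X$ with $g(\xi)=\norm{g}$; then $g_n(\xi)\to\norm{g}$, so along a further subsequence $g_n(\xi)\geq\tfrac12\norm{g}$ for all $n$, whence $\norm{g_k+g_l}\geq(g_k+g_l)(\xi)\geq\norm{g}$ whenever $k\neq l$. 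If $g=0$, then for each fixed $k$ we have $\liminf_{l\to\infty}\norm{g_k+g_l}\geq\norm{g_k}=1$ by weak lower semicontinuity of the norm (as $g_k+g_l\to g_k$ weakly), and a routine diagonal argument yields a subsequence with $\norm{g_k+g_l}\geq\tfrac12$ for all $k\neq l$. In either case there is $\eta\in(0,2]$ with $\norm{g_k+g_l}\geq\eta$ for all $k\neq l$ along the resulting subsequence, and applying the uniform convexity of $X^*$ to the norm-one functionals $g_k$ and $-g_l$ gives $\norm{g_k-g_l}\leq 2\bigl(1-\delta(\eta)\bigr)=:2\theta<2$. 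The corresponding subsequence of $\{(y_n,g_n)\}$ is again an Auerbach system, so we may assume this holds for the whole system.

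It remains to assemble the set. Let $S=\{y_n:n\in\N\}\subseteq S_X\subseteq B_X$, and for $k\neq l$ put $f_{kl}=\norm{g_l-g_k}^{-1}(g_l-g_k)\in X^*$, so $\norm{f_{kl}}=1$. From $g_n(y_m)=\delta_{nm}$ one gets $f_{kl}(y_m)=\norm{g_l-g_k}^{-1}(\delta_{lm}-\delta_{km})$, so that $f_{kl}(y_k)=-\norm{g_l-g_k}^{-1}$, $f_{kl}(y_l)=\norm{g_l-g_k}^{-1}$, and $f_{kl}(y_m)=0$ for $m\notin\{k,l\}$. Hence $f_{kl}(y_k)\leq f_{kl}(y_m)\leq f_{kl}(y_l)$ for every $m\in\N$, while $f_{kl}(y_l)-f_{kl}(y_k)=2\norm{g_l-g_k}^{-1}\geq 1/\theta$. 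Thus $S$ is an infinite $(1,1,1/\theta)$-b.s.a. subset of $X$, and since $\theta<1$ we conclude $K_a(X)\geq1/\theta>1$.

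The step I expect to be the main obstacle is the extraction in the second paragraph, that is, converting the soft geometric hypothesis into a uniform lower bound on $\norm{g_k+g_l}$: reflexivity of $X$ supplies a weakly convergent subsequence of the Auerbach functionals, and the dichotomy on whether its weak limit vanishes is precisely what makes that lower bound available, after which uniform convexity of $X^*$ closes the estimate. Everything else is routine bookkeeping with the Auerbach biorthogonality relations.
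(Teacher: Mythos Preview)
Your proof is correct and shares the paper's overall strategy: take an Auerbach system $\{(y_n,g_n)\}$, use uniform convexity of $X^*$ to bound $\norm{g_k-g_l}$ uniformly below $2$ on an infinite subset, then normalize $g_l-g_k$ to obtain the separating functionals. The difference lies only in how the bound is extracted. The paper stabilizes $\lambda_{ij}=2/\norm{g_i-g_j}$ by a Ramsey argument and reasons by contradiction: if $\lambda_{i_ki_{k+1}}\to1$ then uniform convexity would force $\norm{g_{i_k}+g_{i_{k+1}}}\to0$, contradicting $(g_{i_k}+g_{i_{k+1}})(y_{i_k})=1$. You instead use reflexivity to pass to a weakly convergent subsequence of $(g_n)$ in $X^*$ and split on whether the weak limit is zero to obtain $\norm{g_k+g_l}\geq\eta>0$.

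Both detours can in fact be bypassed, and the paper's own contradiction already contains the shortcut. For \emph{any} Auerbach system and any $k\neq l$ one has $\norm{g_k+g_l}\geq(g_k+g_l)(y_k)=1$, since $\norm{y_k}=1$. So $\eta=1$ is available from the outset with no subsequence extraction at all, and uniform convexity of $X^*$ gives $\norm{g_k-g_l}\leq2\bigl(1-\delta_{X^*}(1)\bigr)$ for every pair $k\neq l$. Your weak-compactness dichotomy is valid but superfluous; the entire second paragraph collapses to this one line.
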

\begin{proof}
Let $\{(x_i,x_i^*):i\in\N\}$ be an Auerbach system in $X$. The space $X$ is uniformly smooth, so its dual 
space is uniformly convex. From the strict convexity of $X^*$ for $i\neq j$ and $s,t\in(0,1)$ with $s+t=1$ we have
\begin{equation}\label{eq:11}
\norm{sx_i^*-tx_j^*}<1.
\end{equation}
For $i\neq j$, we set $\lambda_{ij}=\frac{1}{\norm{\frac{1}{2}x_i^*-\frac{1}{2}x_j^*}}$. By \eqref{eq:11} $\lambda_{ij}>1$. Also
\[\norm{\frac{1}{2}x_i^*-\frac{1}{2}x_j^*}\geq\left(\frac{1}{2}x_i^*-\frac{1}{2}x_j^*\right)(x_i)=\frac{1}{2}.\]
So $\lambda_{ij}\in(1,2]$, $i\neq j$. We choose $M\in[\N]^\omega$ such that the following limit exists
\[\lim_{i<j\in M}\lambda_{ij}.\]
We will show that $\lim_{i<j}\lambda_{ij}>1$. Let now $M=(i_k)_{k\in\N}$. We assume that $\lim_{i<j\in M}\lambda_{ij}=1$. Then the sequences
\[\pn{\norm{\lambda_{i_ki_{k+1}}x_{i_k}^*}}_{k\in\N},\ \pn{\norm{\lambda_{i_ki_{k+1}}x_{i_{k+1}}^*}}_{k\in\N},\ 
\pn{\norm{\frac{1}{2}\lambda_{i_ki_{k+1}}(x_{i_k}^*-x_{i_{k+1}}^*)}}_{k\in\N}\]
all converge to $1$. By the uniform convexity of $X^*$ we get that
\[\norm{x_{i_k}^*+x_{i_{k+1}}^*}\to0,\ k\to\infty\ [16].\]
On the other hand $\pn{x_{i_k}^*+x_{i_{k+1}}^*}\pn{x_{i_k}}=1$, for $k\in\N$, 
a contradiction. Thus there exist $i_0\in M$ and $c>1$ such 
that $\lambda_{ij}\geq c$, for every $i_0\leq i<j\in M$. For $i_0\leq i<j\in M$, from the choice of $\lambda_{ij}$
we take that
\[\norm{\lambda_{ij}\pn{\frac{1}{2}x_i^*-\frac{1}{2}x_i^*}}=1.\]
Further
\[\lambda_{ij}\pn{\frac{1}{2}x_i^*-\frac{1}{2}x_j^*}(x_i-x_j)=\lambda_{ij}\pn{\frac{1}{2}+\frac{1}{2}}=\lambda_{ij}\geq c>1\]
and of course
\[\lambda_{ij}\pn{\frac{1}{2}x_i^*-\frac{1}{2}x_j^*}(x_k)=0,\ k\not\in\{i,j\}.\]
Our proof is complete.
\end{proof}
\begin{cor}
Let $\normed$ be a Βanach space such that $\dnormed$ is strictly convex (so $\normed$ is smooth) and $\{(x_i,x_i^*):i\in\N\}$ an Auerbach system 
in $X$. Then the set $\{x_i:i\in\N\}$ is a bounded and separated antipodal set 
with constants $c_1=c_2=1$ and $d=(1+)$ (that is for every $i\neq j\in\N\ \exists f\in B_{X^*}$ such that $1<f(x_i)-f(x_j)$ and 
$f(x_j)\leq f(x_k)\leq f(x_i)$, $k\in\N$).
\end{cor}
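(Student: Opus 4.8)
The plan is to reuse, almost verbatim, the construction from the proof of Proposition 1.4, but to stop just before the point where the uniform convexity of $X^*$ is invoked: strict convexity of $X^*$ by itself already produces a strict---though not uniform---gap, and that is precisely what the notation $d=(1+)$ records. Smoothness of $X$ plays no role in the argument; it is only the parenthetical consequence of strict convexity of $X^*$ recorded in the statement.

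First I would fix $i\neq j$ in $\N$ and check that $x_i^*\neq -x_j^*$: were $x_i^*=-x_j^*$, then $1=x_i^*(x_i)=-x_j^*(x_i)=0$, contradicting the biorthogonality relations. Since $\norm{x_i^*}=\norm{-x_j^*}=1$ and the two functionals are distinct, strict convexity of $X^*$ yields
\[\norm{\frac{1}{2}x_i^*-\frac{1}{2}x_j^*}=\norm{\frac{1}{2}x_i^*+\frac{1}{2}(-x_j^*)}<1.\]
On the other hand, evaluating at $x_i$ gives $\norm{\frac{1}{2}x_i^*-\frac{1}{2}x_j^*}\geq\pn{\frac{1}{2}x_i^*-\frac{1}{2}x_j^*}(x_i)=\frac{1}{2}$, so the scalar $\lambda_{ij}=1/\norm{\frac{1}{2}x_i^*-\frac{1}{2}x_j^*}$ lies in $(1,2]$.

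Next I would put $f_{ij}=\lambda_{ij}\pn{\frac{1}{2}x_i^*-\frac{1}{2}x_j^*}$. By the choice of $\lambda_{ij}$ we have $\norm{f_{ij}}=1$, hence $f_{ij}\in B_{X^*}$; and biorthogonality gives $f_{ij}(x_i)=\lambda_{ij}/2$, $f_{ij}(x_j)=-\lambda_{ij}/2$ and $f_{ij}(x_k)=0$ for $k\notin\{i,j\}$. Consequently $f_{ij}(x_j)\leq f_{ij}(x_k)\leq f_{ij}(x_i)$ for every $k\in\N$, while $f_{ij}(x_i)-f_{ij}(x_j)=\lambda_{ij}>1$. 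Together with $\norm{x_i}=1$ for all $i$, this exhibits $\{x_i:i\in\N\}$ as a bounded and separated antipodal set with $c_1=c_2=1$ and with the pointwise strict lower bound $d=(1+)$.

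There is no real obstacle here: the only step requiring a moment of attention is the verification $x_i^*\neq -x_j^*$, which is exactly what allows one to pass from the non-strict convexity inequality to the strict one. Everything else is the same bookkeeping already carried out in Proposition 1.4, minus the passage to a subsequence that would be needed to turn ``$(1+)$'' into a genuine constant $d>1$ (and which does require uniform, not merely strict, convexity of $X^*$).
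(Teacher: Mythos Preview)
Your proof is correct and follows essentially the same approach as the paper: define $\lambda_{ij}=1/\norm{\tfrac{1}{2}x_i^*-\tfrac{1}{2}x_j^*}\in(1,2]$ via strict convexity of $X^*$, set $f=\lambda_{ij}\bigl(\tfrac{1}{2}x_i^*-\tfrac{1}{2}x_j^*\bigr)$, and read off the required inequalities from biorthogonality. The only addition is your explicit check that $x_i^*\neq -x_j^*$, which the paper leaves implicit.
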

\begin{proof}
As in the proof of Proposition 1.4 from the strict convexity of $X^*$ for $i\neq j\in\N$ we set $\lambda_{ij}=\frac{1}{\norm{\frac{1}{2}x_i^*-\frac{1}{2}x_j^*}}$, 
$f=\lambda_{ij}\pn{\frac{1}{2}x_i^*-\frac{1}{2}x_j^*}$, and we get that $\lambda_{ij}>1$ and $\norm{f}=1$. Thus for $i\neq j\in\N$ and $k\in\N$ we have
\begin{gather*}
f(x_i)-f(x_j)=\lambda_{ij}\pn{\frac{1}{2}x_i^*-\frac{1}{2}x_j^*}(x_i-x_j)=\lambda_{ij}\pn{\frac{1}{2}+\frac{1}{2}}=\lambda_{ij}>1\text{ and}\\
-\frac{\lambda_{ij}}{2}=f(x_j)\leq f(x_k)\leq f(x_i)=\frac{\lambda_{ij}}{2}.
\end{gather*}
Our proof is complete.
\end{proof}
The next result concerns smooth Banach spaces of finite dimension.
\begin{prop}
Let $X$ be a smooth Banach space with $\dim X=n$. Then $X$ contains a $(1,1,d)-$b.s.a. set with $d>1$ and cardinality $2n$.
\end{prop}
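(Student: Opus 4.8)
The plan is to exhibit the desired set as a ``doubled'' Auerbach system. Fix an Auerbach basis $\{(x_i,x_i^*):1\le i\le n\}$ of $X$ (such a basis exists in every finite-dimensional normed space by Auerbach's lemma), so that $\norm{x_i}=\norm{x_i^*}=1$ and $x_i^*(x_j)=\delta_{ij}$, and put $S=\{x_i:1\le i\le n\}\cup\{-x_i:1\le i\le n\}$. This is a set of $2n$ points of $S_X\subseteq B_X$, so the constant $c_1=1$ is immediate, and the separating functionals will be the $x_i^*$ themselves together with suitably normalized sums and differences $x_i^*\pm x_j^*$. Before constructing them I would record the one place where finite-dimensionality really enters: \emph{if $X$ is smooth and $\dim X<\infty$, then $X^*$ is strictly convex.} Indeed, were there distinct $f,g\in S_{X^*}$ with $\norm{\frac12(f+g)}=1$, compactness of $S_X$ would give $x\in S_X$ with $\frac12(f+g)(x)=1$, forcing $f(x)=g(x)=1$ and hence two distinct supporting functionals at $x$, contradicting smoothness.

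Given strict convexity of $X^*$, for each $i\neq j$ the vectors $x_i^*,x_j^*$ on the one hand and $x_i^*,-x_j^*$ on the other are pairs of distinct unit vectors, so $\norm{\frac12(x_i^*-x_j^*)}<1$ and $\norm{\frac12(x_i^*+x_j^*)}<1$. Set $\lambda_{ij}=\norm{\frac12(x_i^*-x_j^*)}^{-1}>1$, $\mu_{ij}=\norm{\frac12(x_i^*+x_j^*)}^{-1}>1$, and let $f_{ij}=\lambda_{ij}\cdot\frac12(x_i^*-x_j^*)$ and $g_{ij}=\mu_{ij}\cdot\frac12(x_i^*+x_j^*)$, each of norm exactly $1$. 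Finally define $d=\min\pn{\{2\}\cup\{\lambda_{ij}:i\neq j\}\cup\{\mu_{ij}:i\neq j\}}$; being a minimum of finitely many reals each strictly greater than $1$, it satisfies $d>1$. This is the second use of finiteness of the dimension: there are only finitely many separation constants to dominate simultaneously.

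It then remains to check, for every pair of distinct points $u,v$ of $S$, that some $h\in B_{X^*}$ satisfies $d\le h(u)-h(v)$ together with $h(v)\le h(w)\le h(u)$ for all $w\in S$; this is short, since every functional in play takes only the three values $0,\pm(\text{one fixed positive number})$ on $S$. For $\{x_i,-x_i\}$ take $h=x_i^*$, whose values on $S$ lie in $\{-1,0,1\}$ with $h(x_i)=1=\max_{w\in S}h(w)$ and $h(-x_i)=-1=\min_{w\in S}h(w)$, so $h(x_i)-h(-x_i)=2\ge d$. For $\{x_i,x_j\}$ with $i\neq j$ take $h=f_{ij}$, whose values on $S$ lie in $\{-\frac12\lambda_{ij},0,\frac12\lambda_{ij}\}$ with $h(x_i)=\frac12\lambda_{ij}=\max_{w\in S}h(w)$ and $h(x_j)=-\frac12\lambda_{ij}=\min_{w\in S}h(w)$, whence $h(x_i)-h(x_j)=\lambda_{ij}\ge d$; the pair $\{-x_i,-x_j\}$ is handled by $-f_{ij}$. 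For a mixed pair $\{x_i,-x_j\}$ with $i\neq j$ take $h=g_{ij}$, whose values on $S$ lie in $\{-\frac12\mu_{ij},0,\frac12\mu_{ij}\}$ with $h(x_i)=\frac12\mu_{ij}=\max_{w\in S}h(w)$ and $h(-x_j)=-\frac12\mu_{ij}=\min_{w\in S}h(w)$, whence $h(x_i)-h(-x_j)=\mu_{ij}\ge d$; interchanging $u$ and $v$ in any pair is accommodated by replacing $h$ with $-h$. In every case $\norm{h}=1=c_2$, so $S$ is a $(1,1,d)$-b.s.a. set of cardinality $2n$, which is the claim.

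The main difficulty is conceptual rather than computational: the implication ``$X$ smooth $\Rightarrow X^*$ strictly convex'' fails for infinite-dimensional $X$, so this argument is genuinely finite-dimensional, as one should expect given that the general case needs the deeper results cited in the introduction. The one point needing care in the verification is to confirm not merely that the chosen $h$ separates $u$ from $v$, but that the extreme values of $h$ over \emph{all} of $S$ are attained at (in particular) $u$ and $v$, so that the monotonicity requirement $h(v)\le h(w)\le h(u)$ genuinely holds for every $w\in S$; this is immediate from biorthogonality of the Auerbach basis but must be made explicit.
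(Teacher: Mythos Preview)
Your proof is correct and follows essentially the same approach as the paper's: take the set $S=\{\pm x_i:1\le i\le n\}$ from an Auerbach basis, use strict convexity of $X^*$ (valid since $\dim X<\infty$) to normalize $\tfrac12(x_i^*\pm x_j^*)$ to unit functionals, and separate the three types of pairs using $x_i^*$, the normalized difference, and the normalized sum respectively. You are more explicit than the paper in two places---you justify the implication ``smooth $\Rightarrow X^*$ strictly convex'' via compactness of $S_X$, and you define $d$ explicitly as the minimum over the finitely many $\lambda_{ij},\mu_{ij}$ and $2$---but the argument is otherwise identical.
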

\begin{proof}
Let $\{(x_i,x_i^*):1\leq i\leq n\}$ be an Auerbach basis of $X$. The space $X$ is smooth, consequently $X^*$ is strictly 
convex. Thus, since $\norm{\frac{1}{2}x_i^*\pm\frac{1}{2}x_j^*}<1$, for every $1\leq i\neq j\leq n$, there exist $a,b>1$ such that
\[\norm{a\pn{\frac{1}{2}x_i^*+\frac{1}{2}x_j^*}}=1\]
and
\[\norm{b\pn{\frac{1}{2}x_i^*-\frac{1}{2}x_j^*}}=1.\]
Equivalently for every $1\leq i\neq j\leq n$ there exist $s,t>\frac{1}{2}$ such that $\norm{s(x_i^*+x_j^*)}=1$ and $\norm{t(x_i^*-x_j^*)}=1$.

We will show that the set $\{\pm x_i:1\leq i\leq n\}$ satisfies the conclusion. Let now $1\leq i\neq j\leq n$, we set $\phi_1=t\pn{x_i^*-x_j^*}$ 
and $\phi_2=s\pn{x_i^*+x_j^*}$. Then
\leqnomode
\begin{align*}
&x_i^*(x_i)=1,\ x_i^*(-x_i)=-1,\ x_i^*(x_i+x_i)=2\text{ and }x_i^*(x_k)=0,\tag{1}\\
&\text{for every }1\leq k\neq i\leq n.\text{ So }x_i\text{ and }-x_i\text{are separated by }x_i^*.\\
&\tag{2}\phi_1(x)=\left\{\begin{array}{rl}
t, & x=x_i\\
-t, & x=x_j\\
-t, & x=-x_i\\
t, & x=-x_j\\
0, & x=\pm x_k,\ k\not\in\{i,j\}\\
t+t>1, & x=x_i-x_j\\
\end{array}\right.\\
&\text{So the pairs $x_i,x_j$ and $-x_i,-x_j$ are separated by $\phi_1$.}\\
&\tag{3}\phi_2(x)=\left\{\begin{array}{rl}
-s, & x=-x_i\\
s, & x=x_j\\
s, & x=x_i\\
-s, & x=-x_j\\
0, & x=\pm x_k,\ k\not\in\{i,j\}\\
s+s>1, & x=x_j-x_i\\
\end{array}\right.\\
\end{align*}
\reqnomode
Consequently the pairs $-x_i,x_j$, and $x_i,-x_j$, are separated by $\phi_2$, so we are finished.
\end{proof}
We mention here that for every uniformly smooth Banach space, $X$ and each $\{(x_i,x_i^*):i\in\N\}$ Auerbach system in $X$ there exists an 
infinite subset $M$ of $\N$ such that the set $\{\pm x_i:i\in M\}$ is a $(1,1,d)-$subset of $X$, with $d>1$. Indeed as in the proof of 
Proposition 1.4 we can prove that there exist an infinite subset $M$ of $\N$ and $\mu_{ij}\in(1,2]$, for $i\neq j\in M$ such that 
\[\norm{\mu_{ij}\left(\frac{1}{2}x_i^*+\frac{1}{2}x_j^*\right)}=1\]
and
\[\mu_{ij}\geq r>1\text{, for }i\neq j\in M.\]
Now as in the proof of Proposition 1.6 we take that $x_i,-x_i$ are separated by $x_i^*$ ,$x_i,x_j$ are separated 
by $\lambda_{ij}\left(\frac{1}{2}x_i^*-\frac{1}{2}x_j^*\right)$ and $x_i,-x_j$ are separated by $\mu_{ij}\left(\frac{1}{2}x_i^*+\frac{1}{2}x_j^*\right)$, for $i\neq j\in M$.
 Analogously it can be proved that for every Banach space $X$ such that $\dnormed$ is strictly convex and each Auerbach system in, $\{(x_i,x_i^*):i\in\N\}$ in $X$ the set $\{\pm x_i:i\in\N\}$ is a bounded and separated antipodal subset of $X$ with constants $c_1=c_2=1$ and $d=(1+)$.

It has been noted that the fact that, $K_a(X)>1$, for every Banach space $X$ is a strengthening of 
Elton-Odell's theorem. The proof of this fact in the case $X$ contains isomorphicaly $c_0$ or $l_p$ for some $1\leq p<\infty$
or $X$ is uniformly smooth is independent from the theorem of Elton-Odell. The last no 
longer remains true, in our approach, for the general case or the cases $X$ is reflexive or $X$ 
has separable dual. For the cases $X$ is reflexive and $X$ has separable dual are needed particular 
parts of the proof of Elton-Odell's theorem. For the general case Theorem 4.1 of [9], which is an 
independent proof of the theorem of Elton-Odell, is needed. To utilize these results 
we are going to prove Theorem 1.7 below. In our proof we need Theorem 1 of [17] which we now state.
\begin{unthm}[1 of {[17]}]
Let $\normed$ be a Banach space. Then for every $\epsilon>0$ there exists an equivalent norm $\norm{\cdot}'$ on $X$ such that:
\begin{enumerate}[label=(\arabic*)]
\item$d(\normed,(X,\norm{\cdot}'))\leq1+\epsilon$ (the Banach-Mazur distance) and
\item$(X,\norm{\cdot}')$ contains an infinite equilateral set.
\end{enumerate}
\end{unthm}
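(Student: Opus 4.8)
The plan is to build the equivalent norm by prescribing its unit ball as a tiny convex perturbation of $B_X$ adapted to a sequence chosen for the purpose, so that the sequence itself becomes the equilateral set. Fix $\epsilon>0$ and choose small auxiliary parameters $\delta,\eta>0$ depending on $\epsilon$. It would suffice to produce a sequence $(x_n)\subseteq S_X$, a number $\ell\geq 1$, and functionals $g_{nm}\in X^*$ (for $n\neq m$) such that: (a) $\ell\leq\norm{x_n-x_m}\leq\ell+\delta$ for all $n\neq m$; (b) $\norm{g_{nm}}\leq 1+\eta$ and $g_{nm}(x_n-x_m)\geq(1-\eta)\norm{x_n-x_m}$; and (c) $g_{nm}(x_m)\leq g_{nm}(x_k)\leq g_{nm}(x_n)$ for every $k$. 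Granting this, put $\lambda:=\ell+\delta$ and let $\norm{\cdot}'$ be the Minkowski gauge of
\[C:=\overline{\conv}\Big(\big\{\pm\tfrac1\lambda(x_n-x_m):n\neq m\big\}\cup\tfrac1{1+\epsilon}B_X\Big).\]

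Here is why this finishes the proof. By (a) each $\tfrac1\lambda(x_n-x_m)$ lies in $B_X$, so $\tfrac1{1+\epsilon}B_X\subseteq C\subseteq B_X$; hence $C$ is a bounded symmetric closed convex body with $0$ in its interior, $\norm{\cdot}'$ is an equivalent norm, and $\norm{x}\leq\norm{x}'\leq(1+\epsilon)\norm{x}$ for all $x$, so that (taking $T=\mathrm{id}$) $d(\normed,(X,\norm{\cdot}'))\leq 1+\epsilon$, which is (1). Also $\norm{x_n-x_m}'\leq\lambda$ since $\tfrac1\lambda(x_n-x_m)\in C$. For the reverse inequality, by (c) we have $\sup_{k\neq l}g_{nm}(x_k-x_l)=\max_k g_{nm}(x_k)-\min_l g_{nm}(x_l)=g_{nm}(x_n-x_m)$, whence
\[\sup_{c\in C}g_{nm}(c)=\max\Big(\tfrac{\norm{g_{nm}}}{1+\epsilon},\ \tfrac1\lambda\,g_{nm}(x_n-x_m)\Big),\]
and by (a), (b) and the smallness of $\delta,\eta$ the first term is at most the second, so $g_{nm}$ attains $\sup_C g_{nm}$ at $\tfrac1\lambda(x_n-x_m)$; since $g_{nm}(x_n-x_m)>0$ this forces $\norm{x_n-x_m}'=\lambda$. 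Thus $\{x_n:n\in\N\}$ is an infinite $\lambda$-equilateral set in $(X,\norm{\cdot}')$, which is (2).

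It remains to produce $(x_n)$, $\ell$ and the $g_{nm}$. Property (a) is arranged last: starting from an infinite Auerbach system $\{(y_n,y_n^*):n\in\N\}$ in $X$ (every infinite dimensional space has one), for which $1\leq\norm{y_n-y_m}\leq 2$, one applies Ramsey's theorem at the very end to pass to a subsequence along which all distances lie in an interval of length $\delta$; since (b) and (c) are ``global'' statements about the sequence, they persist under passage to a subsequence. The reason for working with a biorthogonal system is to force (c). Observe that if $(x_n)$ were \emph{already} $\lambda$-equilateral, then (c) would hold for \emph{any} norming functional $g_{nm}$ of $x_n-x_m$, because $g_{nm}(x_n)-g_{nm}(x_k)=g_{nm}(x_n-x_k)\leq\norm{x_n-x_k}=\lambda=g_{nm}(x_n-x_m)$ gives $g_{nm}(x_k)\geq g_{nm}(x_m)$, and symmetrically $g_{nm}(x_k)\leq g_{nm}(x_n)$. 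As we only have (a), one instead fixes a norming functional $h_{nm}$ of $x_n-x_m$ satisfying the sign condition $h_{nm}(x_n)\geq0\geq h_{nm}(x_m)$ and ``localises'' it: $g_{nm}:=h_{nm}-\sum_{k\neq n,m}h_{nm}(x_k)\,x_k^*$ then vanishes on every $x_k$ with $k\neq n,m$, has $g_{nm}(x_n)=h_{nm}(x_n)\geq0$, $g_{nm}(x_m)=h_{nm}(x_m)\leq0$ and $g_{nm}(x_n-x_m)=\norm{x_n-x_m}$, which yields (c), while $\norm{g_{nm}}\leq 1+\sum_{k\neq n,m}\lvert h_{nm}(x_k)\rvert\cdot\norm{x_k^*}$.

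Consequently everything comes down to an inductive selection, in the spirit of Mazur's basic-sequence construction and carried out before the Ramsey step: one chooses $x_1,x_2,\dots\in S_X$ together with biorthogonal functionals $x_n^*$ of uniformly bounded norm, arranging that each new $x_{N+1}$ lies in the (finite-codimensional, hence infinite dimensional) subspace annihilated by all previously chosen $x_k^*$ and $h_{ij}$ (this preserves biorthogonality; each old $g_{ij}$ then vanishes at $x_{N+1}$, so by the sign condition (c) persists for it; and each sum $\sum_k\lvert h_{ij}(x_k)\rvert$ stays finite), while at the same time each new difference $x_i-x_{N+1}$ is equipped with a functional $h_{i,N+1}$ that nearly norms it, satisfies the sign condition, and is $\ell^1$-small on all earlier $x_k$, so that $\norm{g_{i,N+1}}\leq 1+\eta$. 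The hard part, and the technical heart of the proof of Theorem~1 of~[17], is exactly this last requirement: at each stage one must find, for every new difference, a nearly norming functional that is simultaneously ``two-sided'' at the two relevant unit vectors and small on all the other, already fixed, unit vectors, and one must keep the $\norm{x_n^*}$ bounded; norming functionals of a difference of unit vectors possess none of these features for free. Dealing with it forces one to choose each $x_{N+1}$ not merely inside a kernel but almost orthogonal, in James's sense (via Riesz's lemma inside the relevant annihilator), to the span of the earlier vectors, and then to extract the $h_{i,N+1}$ from the exposed faces of $B_{X^*}$ determined by the differences with corresponding care; dovetailing all of this with the biorthogonality and with the final Ramsey stabilisation is where the real work lies.
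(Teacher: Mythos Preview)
The paper does not prove this theorem; it is quoted from~[17]. However, the proof of Theorem~1.7 in the paper exposes the method of~[17]: starting from a suitable bounded sequence $(x_n)$ with no norm--convergent subsequence, one passes (via a Brunel--Sucheston / Ramsey argument) to a subsequence along which all the two--variable limits $\lim_{k<l}\norm{sx_k+tx_l}$ exist, and then produces an equivalent norm $\norm{\cdot}'$ satisfying
\[
\norm{sx_n+tx_m}'=\lim_{\substack{k<l\\k\to\infty}}\norm{sx_k+tx_l}\qquad(n\neq m,\ s,t\in\mathbb{R}),
\]
together with the two--sided estimate $(1+\epsilon)^{-1}\norm{\cdot}\leq\norm{\cdot}'\leq(1-\epsilon)^{-2}\norm{\cdot}$. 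Equilaterality is then immediate: $\norm{x_n-x_m}'$ equals the single number $\lim_{k<l}\norm{x_k-x_l}$ for every $n\neq m$. No supporting functionals are needed at all.

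Your route is genuinely different: you keep the original norm on most of $B_X$ and only push out the boundary at the specific points $\tfrac{1}{\lambda}(x_n-x_m)$, using separating functionals $g_{nm}$ to certify that these points remain extreme. The verification that (a)--(c) imply the conclusion is correct and nicely done. The difficulty is that you do not actually produce (b) and (c). The step ``equip each new difference $x_i-x_{N+1}$ with a functional $h_{i,N+1}$ that nearly norms it, satisfies the sign condition, and is $\ell^1$--small on all earlier $x_k$'' is precisely the point where a proof is required, and neither placing $x_{N+1}$ in a finite--codimensional annihilator nor making it James--almost--orthogonal to $\mathrm{span}\{x_1,\dots,x_N\}$ gives any control over how a norming functional of $x_i-x_{N+1}$ acts on the \emph{already fixed} vectors $x_1,\dots,x_{i-1},x_{i+1},\dots,x_N$; nor is the sign condition $h_{nm}(x_n)\geq 0\geq h_{nm}(x_m)$ available for free for a norming functional of a difference of unit vectors. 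You yourself flag this as ``the technical heart'' and ``where the real work lies'', but you do not carry it out, and it is not clear that your scheme can be completed as written. The Brunel--Sucheston renorming of~[17] bypasses the whole issue: by \emph{defining} $\norm{\cdot}'$ so that all pairs $(x_n,x_m)$ are isometric to the limiting pair, the equilateral property holds by construction, with no need to locate supporting functionals satisfying (c).
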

\begin{thm}
Let $X$ be a Banach space and $(x_n)$ a normalized weakly null and $d-$separated sequence in $X$ with $d>0$. Then $K_a(X)\geq d$.
\end{thm}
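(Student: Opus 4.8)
The plan is to pass to a subsequence, form the spreading model it generates, and read the separating functionals off it. First, if $d\le1$ there is nothing to prove, since $K_a(X)\ge1\ge d$ by (5) of Remark 1.1; so assume $d>1$ and fix $\varepsilon>0$.

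Passing to a subsequence (still denoted $(x_n)$) I may assume that $(x_n)$ is basic and generates a spreading model $(e_n)$, and, after a routine diagonal refinement, that for every finite $F\subseteq\N$ the linear bijection $T_F\colon\mathrm{span}\{e_1,\dots,e_{|F|}\}\to\mathrm{span}\{x_n:n\in F\}$ sending $e_m$ to the $m$-th smallest element of $\{x_n:n\in F\}$ satisfies $\norm{T_F}\le1+\varepsilon$ and $\norm{T_F^{-1}}\le1+\varepsilon$. Note that $\norm{e_n}=\lim_n\norm{x_n}=1$; that $L:=\norm{e_1-e_2}=\lim_{i<j}\norm{x_{n_i}-x_{n_j}}\ge d>1$, because the $(x_n)$ are $d$-separated; and that, since $(x_n)$ is weakly null, $(e_n)$ is $1$-suppression-unconditional, so that setting a norm-$\le1$ functional on $\mathrm{span}\{e_a,e_b\}$ equal to $0$ on the other coordinates yields a norm-$\le1$ functional on $\overline{\mathrm{span}}\{e_n\}$.

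Now let $h^*$ be a norming functional of $e_1-e_2$ on $\mathrm{span}\{e_1,e_2\}$ and put $\alpha=h^*(e_1)$, $\beta=h^*(e_2)$; then $\alpha-\beta=L$, and as $|\alpha|,|\beta|\le\norm{h^*}=1$ we get $\alpha\ge L-1>0$ and $\beta\le1-L<0$. Fix a pair $i<j$. For each finite $F\supseteq\{i,j\}$ let $g_F$ be the functional on $\overline{\mathrm{span}}\{e_n\}$ taking the value $\alpha$ at the coordinate of $x_i$ in $F$, $\beta$ at that of $x_j$, and $0$ elsewhere; by $1$-spreading and $1$-suppression-unconditionality $\norm{g_F}\le1$, so $\tilde h_F:=\tfrac1{1+\varepsilon}\,g_F\circ T_F^{-1}$, extended to $X$ by Hahn--Banach, lies in $B_{X^*}$ with $\tilde h_F(x_i)=\tfrac{\alpha}{1+\varepsilon}$, $\tilde h_F(x_j)=\tfrac{\beta}{1+\varepsilon}$ and $\tilde h_F(x_l)=0$ for $l\in F\setminus\{i,j\}$ — values that do not depend on $F$. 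Letting $f_{ij}\in B_{X^*}$ be a weak$^*$ cluster point of the net $(\tilde h_F)_F$ directed by inclusion, and using that for every $l\ne i,j$ one has $\tilde h_F(x_l)=0$ for all $F\supseteq\{i,j,l\}$, one obtains
\[
f_{ij}(x_j)=\tfrac{\beta}{1+\varepsilon}<0=f_{ij}(x_l)<\tfrac{\alpha}{1+\varepsilon}=f_{ij}(x_i)\qquad(l\ne i,j).
\]
Hence for $S:=\{x_n:n\in\N\}\subseteq B_X$ and any $x_i\ne x_j$ in $S$, the functional $f_{ij}\in B_{X^*}$ satisfies $f_{ij}(x_j)\le f_{ij}(x_l)\le f_{ij}(x_i)$ for all $l$ and $f_{ij}(x_i)-f_{ij}(x_j)=\tfrac{L}{1+\varepsilon}\ge\tfrac{d}{1+\varepsilon}$; by (1) of Remark 1.1, $S$ is an infinite $(1,1,\tfrac{d}{1+\varepsilon})$-b.s.a subset of $X$, whence $K_a(X)\ge\tfrac{d}{1+\varepsilon}$. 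Letting $\varepsilon\to0$ gives $K_a(X)\ge d$.

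The part I expect to need genuine care is the diagonal refinement — securing a single $\varepsilon$ for which all the maps $T_F$ are $(1+\varepsilon)$-isomorphisms simultaneously — together with the (routine but essential) verification that the weak$^*$ cluster point really annihilates $x_l$ for every $l\ne i,j$. Everything else, in particular the final normalisation of the constants, follows the pattern already illustrated in Remark 1.1.
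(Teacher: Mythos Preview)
Your argument has a genuine gap at precisely the point you flag: the ``routine diagonal refinement'' securing $\norm{T_F},\norm{T_F^{-1}}\le 1+\varepsilon$ for \emph{every} finite $F\subseteq\N$ is in general impossible. If it held, then letting $F$ exhaust $\N$ would force the chosen subsequence $(x_n)$ to be $(1+\varepsilon)^2$-equivalent, as a basic sequence, to its own spreading model $(e_n)$. Take $(x_n)$ to be the unit vector basis of Tsirelson's space: it is normalized and weakly null, its spreading model is equivalent to the $\ell_1$ basis, yet no subsequence of it is equivalent to the $\ell_1$ basis at all. Hence no diagonal extraction can deliver what you need; and without that bound the functionals $\tilde h_F$ need not lie in $B_{X^*}$ once $|F|$ is large, so the weak$^*$-cluster-point step --- which must range over arbitrarily large $F$ to kill every coordinate $x_l$ --- collapses.

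The paper avoids this by never leaving $X$. It invokes the renorming from Theorem~1 of [17] to obtain an equivalent norm $\norm{\cdot}'$ on $X$, $(1+\epsilon)$-close to $\norm{\cdot}$, for which $\norm{sx_n+tx_m}'=\lim_{k<l}\norm{sx_k+tx_l}$ for all $s,t\in\mathbb{R}$ and all $n\ne m$. In this norm $(x_n)$ is \emph{exactly} $\lambda$-equilateral with $\lambda\ge d$, hence a $(1,1,\lambda)$-b.s.a.\ set by Remark~1.1(2); the separating functionals $f_{nm}$ are just norming functionals for $x_n-x_m$ in $\norm{\cdot}'$, and rescaling them by $(1-\varepsilon)^2$ lands them in the original $B_{X^*}$. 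The essential difference from your route is that only the two-variable limit is used, and it is realised as an honest norm on $X$ itself --- so there is no transfer from an external spreading-model space, and therefore no need for the uniform-in-$F$ isomorphism you cannot obtain.
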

\begin{proof}
As $d-$separated, with $d>0$, the sequence $(x_n)$, has no norm convergent subsequence.
In addition $(x_n)$ is weakly null. From the method of the proof of Theorem 1 of [17] we have the following:

For every $\epsilon\in(0,1)$, there exist a subsequence of $(x_n)$, still denoted by $(x_n)$, and 
an equivalent norm $\norm{\cdot}'$ on $X$ such that
\leqnomode
\begin{gather}
\frac{1}{1+\epsilon}\norm{x}\leq\norm{x}'\leq\frac{1}{(1-\epsilon)^2}\norm{x}\text{, for every }x\in X\text{ and}\\\setcounter{equation}{2}
\norm{sx_n+tx_m}'=\lim_{\substack{k<l\\k\to\infty}}\norm{sx_k+tx_l}\text{, for every }n\neq m\text{ and }s,t\in\mathbb{R}.
\end{gather}
\reqnomode
By (1.3), $\norm{x_n}'=1$, $n\geq1$ and $\norm{x_n-x_m}'=\lim_{\substack{k<l\\k\to\infty}}\norm{x_k-x_l}=\lambda\geq d$, $n\neq m$. 
Thus $(x_n)$ is a $\lambda-$equilateral subset of $S_X^{\norm{\cdot}'}$ with $\lambda\geq d$. So from (2) 
of Remark 1.1 the set $\{x_n:n\in\N\}$ is a $(1,1,\lambda)-$b.s.a subset of $(X,\norm{\cdot}')$ (see also the proof of Proposition 2 of [17]). 
Now for $n\neq m\in\N$, we choose $f_{nm}$ in $S_{X^*}^{\norm{\cdot}'}$ such that,
\leqnomode
\begin{gather}
f_{nm}(x_n-x_m)=\norm{x_n-x_m}'=\lambda\geq d\text{ and}\\
f_{nm}(x_m)\leq f_{nm}(x_k)\leq f_{nm}(x_n)\text{, for every }k\not\in\{n,m\}.
\end{gather}
By (1.2) we also have for the dual norms
\begin{gather}
\frac{1}{1+\epsilon}\norm{x^*}'\leq\norm{x^*}\leq\frac{1}{(1-\epsilon)^2}\norm{x^*}'\text{, for every }x^*\in X^*.
\end{gather}
Now we set $g_{nm}=(1-\epsilon)^2f_{nm}$, then (1.6) gives $\norm{g_{nm}}\leq\norm{f_{nm}}'=1$, $n\neq m$. Moreover by (1.4)
\[g_{nm}(x_n-x_m)=(1-\epsilon)^2\lambda\geq(1-\epsilon)^2d,\ n\neq m.\]
Combining these inequalities with (1.5) we get that the set $\{x_n:n\in\N\}$ is a $(1,1,(1-\epsilon)^2d)-$b.s.a subset of $\normed$. 
Since $\epsilon\in(0,1)$ was arbitrary, $K_a(X)\geq d$.
\end{proof}
By Theorem 1.7 our strategy to prove Corollaries 1.8 and 1.9 and Theorem 1.10 will be to produce a normalized weakly null sequence which 
is $d-$separated with $d>1$. Also for the second and third of these results we assume, as we may by Proposition 1.2, that each Banach space does not contain isomorphicaly $c_0$ or $l_1$.
\begin{cor}
Let $X$ be a reflexive Banach space, then $K_a(X)>1$.
\end{cor}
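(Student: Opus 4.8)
The plan is to apply Theorem 1.7: it suffices to produce in $X$ a normalized weakly null sequence that is $(1+\epsilon)$-separated for some $\epsilon>0$, since then $K_a(X)\geq1+\epsilon>1$. Throughout I may assume, as the remark before the statement explains, that $X$ contains no isomorphic copy of $c_0$ or $l_1$ — which is anyway automatic for a reflexive space.

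First I would pass to a subspace carrying a shrinking basis. Being infinite dimensional, $X$ has a normalized basic sequence $(e_n)$; its closed span $Y=\overline{[e_n]}$ is a closed subspace of the reflexive space $X$, hence reflexive, so by James's theorem $(e_n)$ is a shrinking basis of $Y$. I would then record the standard fact that, because the basis is shrinking, every normalized block sequence $(u_k)$ of $(e_n)$ is weakly null: writing $u_k=\sum_{i\in I_k}a_ie_i$ with $I_1<I_2<\cdots$ and $\norm{u_k}=1$, one has $u_k=Q_{m_k}u_k$, where $Q_m$ is the tail projection killing $e_1,\dots,e_m$ and $m_k=\min I_k-1\to\infty$; since $(e_n)$ is shrinking, $\norm{Q_m^*f}\to0$ for each $f\in Y^*$, so $f(u_k)=(Q_{m_k}^*f)(u_k)\to0$.

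The heart of the matter is to produce a $(1+\epsilon)$-separated block sequence, and this is where the Elton--Odell theorem enters — not its statement but (part of) its proof. Since $Y$ has the normalized shrinking basis $(e_n)$ and contains no copy of $c_0$, I would invoke the argument in the proof of the Elton--Odell theorem (see [8]; this is precisely the situation to which the hypothesis $c_0\not\hookrightarrow Y$ reduces their proof, cf. also Theorem 4.1 of [9]) to obtain $\epsilon>0$ and a normalized block sequence $(x_k)$ of $(e_n)$ which is $(1+\epsilon)$-separated. The point is that if no normalized block sequence of $(e_n)$ were $(1+\epsilon)$-separated for any $\epsilon>0$, then by an infinite Ramsey argument every normalized block sequence would, for each $\delta>0$, have a further normalized block sequence all of whose pairs of terms lie within distance $1+\delta$, and the block-selection and diagonalization step of Elton and Odell's proof would then embed $c_0$ isomorphically into $Y$, a contradiction. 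By the previous paragraph $(x_k)$ is weakly null, so it is the required normalized weakly null $(1+\epsilon)$-separated sequence in $X$, and Theorem 1.7 gives $K_a(X)\geq1+\epsilon>1$.

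The main obstacle is the third paragraph: extracting from the Elton--Odell argument a \emph{block} — equivalently, weakly null — $(1+\epsilon)$-separated sequence, rather than merely an arbitrary one in $S_X$. This genuinely matters, since the bare conclusion of Elton--Odell is not enough here: if $(y_n)\subseteq S_X$ is $(1+\epsilon)$-separated and, by reflexivity, $y_n\rightharpoonup y$ along a subsequence, then $(y_n-y)/\norm{y_n-y}$ is normalized and weakly null but its separation constant is only of order $(1+\epsilon)/\lim_n\norm{y_n-y}$, which can be $\leq1$; so one must reach inside Elton--Odell's construction to know that, in the absence of $c_0$, the separated sequence may be taken to be a block sequence.
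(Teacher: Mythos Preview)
Your proposal is correct and follows essentially the same line as the paper: use that a reflexive space contains no copy of $c_0$, invoke the \emph{proof} of Elton--Odell to obtain a normalized basic $(1+\epsilon)$-separated sequence, argue it is weakly null, and apply Theorem~1.7. The only difference is in how weak nullity is established: you pass through James's theorem to get a shrinking basis and then observe that normalized block sequences of a shrinking basis are weakly null (this is in fact the route the paper takes for Corollary~1.9, the separable-dual case), whereas the paper argues more directly that a bounded basic sequence in a reflexive space must be weakly null, since by weak compactness of $B_X$ it has a weak cluster point and the only possible weak cluster point of a basic sequence is $0$.
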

\begin{proof}
Since the Banach space $X$ does not contain isomorphicaly $c_0$ from the proof of Elton-Odell's theorem we take that $X$ 
contains a normalized, basic and $d-$separated sequence $(x_n)$ with $d>1$. Since the sequence $(x_n)$ is basic we have, by Lemma 1.6.1 of [1]and by 
the compactness of $(B_X,w)$ that $0$ is the only weak 
cluster point of $(x_n)$ and hence $x_n\xlongrightarrow{w}0$.
\end{proof}
\begin{cor}
Let $X$ be a Banach space with separable dual, then $K_a(X)>1$.
\end{cor}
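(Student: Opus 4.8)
The plan is to reduce, via Theorem~1.7, to the single task of producing inside $X$ a normalized, weakly null, $d$-separated sequence with $d>1$: once such a sequence is in hand, Theorem~1.7 gives $K_a(X)\ge d>1$ at once, just as in Corollaries~1.8. So everything below is aimed at that sequence.

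First I would record the structural consequences of the hypothesis. Since $X^*$ is separable, $X$ is separable and $X$ contains no isomorphic copy of $l_1$ (otherwise $X^*$ would contain $l_\infty$, contradicting separability); together with the standing reduction of Proposition~1.2 we may assume $X$ contains neither $c_0$ nor $l_1$. Next I would manufacture a normalized weakly null sequence in $X$ cheaply: pick a normalized basic sequence $(v_n)$ by Mazur's theorem, and, using separability of $X^*$, pass by a diagonal argument over a norm-dense sequence in $X^*$ to a subsequence (still denoted $(v_n)$) along which $f(v_n)$ converges for every $f\in X^*$, i.e. to a weak-Cauchy subsequence. If it converges weakly, its limit lies in $\overline{[v_n]}$ and is annihilated by all the biorthogonal functionals of $(v_n)$, hence is $0$, so $(v_n)$ is already normalized and weakly null; if it does not, then $(v_{n+1}-v_n)$ is weakly null with norms bounded away from $0$ (because $(v_n)$ is basic), so normalizing it does the job. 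Call the resulting normalized weakly null sequence $(u_n)$.

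The heart of the proof is then to feed $(u_n)$ into the construction underlying the Elton--Odell theorem. As in Corollary~1.8 one uses the part of that proof valid for spaces not containing $c_0$, but here run on $(u_n)$, to obtain a normalized $d$-separated sequence $(x_n)$ with $d>1$. The point at which I would take care is to keep the block structure visible: I expect $(x_n)$ can be produced with $x_n=\sum_{j\in I_n}a_ju_j$, the $I_n$ consecutive finite index blocks with $\min I_n\to\infty$ and $\sup_n\sum_{j\in I_n}|a_j|<\infty$. With this recorded, $(x_n)$ inherits weak nullity from $(u_n)$: for each $f\in X^*$ one has $|f(x_n)|\le\big(\sup_n\sum_{j\in I_n}|a_j|\big)\sup_{j\ge\min I_n}|f(u_j)|\to 0$. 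Having both properties, $(x_n)$ is the required normalized weakly null $d$-separated sequence and Theorem~1.7 finishes the argument.

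The step I expect to be the real obstacle — and the reason the separable-dual case must be separated from the reflexive one, where ``basic plus reflexive'' already forces weak nullity — is precisely the verification that the Elton--Odell construction, when run on a normalized weakly null sequence in a space without $c_0$, can be carried out using only passages to subsequences and block averages of uniformly bounded $l_1$-mass, so that weak nullity is not destroyed. This is where more than the statement of the Elton--Odell theorem is needed, and where the absence of $l_1$ (equivalently, the room afforded by the separability of $X^*$) is genuinely used.
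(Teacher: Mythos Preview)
Your overall strategy --- manufacture a normalized weakly null $d$-separated sequence with $d>1$ and invoke Theorem~1.7 --- is exactly the paper's. The divergence is at the point you yourself flag as the obstacle: how to guarantee that the $(1+\epsilon)$-separated sequence produced by the Elton--Odell machinery remains weakly null.

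Your proposed resolution, bounding the $\ell_1$-mass $\sum_{j\in I_n}|a_j|$ of the Elton--Odell blocks uniformly in $n$, is not something that falls out of the standard proof, and you do not verify it (you write ``I expect'' and leave it there). The paper sidesteps the issue entirely. Rather than starting from an arbitrary normalized weakly null sequence, it uses the separability of $X^*$ to choose at the outset a normalized \emph{shrinking} basic sequence $(y_n)$ in $X$ (this is where the hypothesis is really spent; cf.\ Prop.~1.b.13 of Lindenstrauss--Tzafriri). The whole point of ``shrinking'' is that \emph{every} bounded block basic sequence of $(y_n)$ is weakly null, with no control on the block coefficients required (Prop.~3.2.7 of Albiac--Kalton). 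So once the Elton--Odell argument (run inside $[y_n]$, which does not contain $c_0$) produces a normalized $d$-separated block sequence $(x_n)$ of $(y_n)$ with $d>1$, weak nullity of $(x_n)$ is automatic, and Theorem~1.7 applies.

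In short: same skeleton, but the paper buys weak nullity of the blocks structurally (via a shrinking basis) rather than quantitatively (via an $\ell_1$-mass bound). This is both cleaner and removes the unverified step in your sketch; your roundabout construction of an initial weakly null sequence also becomes unnecessary.
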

\begin{proof}
Since $X$ has separable dual we can choose a normalized and weakly null shrinking basic sequence  $(y_n)$
in $X$ (Prop. 1.b.13 [14] and Prop. 3.2.7 [1])). Further by the remarks that follow Theorem 1.a.5 of [14] we 
may also assume that $\norm{P_n}\leq1+20^{-n}$, $n\in\N$, for the associated projections to the basic sequence $(y_n)$. Now again by the 
proof of Elton-Odell's theorem (recall that $X$ does not contain isomorphically $c_0$) there exists a block 
basic sequence $(x_n)$ of $(y_n)$ and $d>1$ such that $\norm{x_n-x_m}>d$, for every $n\neq m\in\N$. Again by Proposition 3.2.7 of [1] the 
sequence $(x_n)$ is weakly null, so we are finished.
\end{proof}
Now we pass to the proof of the general case.
\begin{thm}
Let $X$ be a Banach space, then $K_a(X)>1$.
\end{thm}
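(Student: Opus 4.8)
The plan is to reduce the general case to the situation already covered by Theorem 1.7, namely to produce inside an arbitrary infinite dimensional Banach space $X$ a normalized weakly null sequence that is $d$-separated for some $d>1$. By Proposition 1.2 we may assume $X$ contains neither $c_0$ nor $l_1$ isomorphically, since otherwise $K_a(X)=2$. We may also assume $X$ is not reflexive and does not have separable dual, as those cases are Corollaries 1.8 and 1.9; but in fact the cleanest route is to treat the general case uniformly using the strong form of the Elton-Odell phenomenon provided by Theorem 4.1 of [9], which is cited in the introduction precisely for this purpose. That theorem gives, in any infinite dimensional Banach space, a $(1+\epsilon)$-separated sequence in $S_X$ with additional structure; the key extra feature we need is that the sequence can be taken to be weakly null, or at least to have $0$ as a weak cluster point after passing to a subsequence.

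The first step is therefore to invoke Theorem 4.1 of [9] to obtain a normalized, $d$-separated sequence $(x_n)$ in $S_X$ with $d>1$, together with whatever selection property that theorem supplies (typically: for every $n$ there is a norm-one functional peaking at $x_n$ and small on the tail, or a biorthogonal-type control). The second step is to pass to a subsequence that is weakly Cauchy or has a weak cluster point: if $X$ contains no copy of $l_1$, Rosenthal's $l_1$ theorem guarantees that $(x_n)$ has a weakly Cauchy subsequence. The third step is to convert ``weakly Cauchy'' into ``weakly null'': replacing $x_n$ by the differences $\tfrac{1}{2}(x_{2n}-x_{2n-1})$ produces a weakly null sequence, and since the original sequence was $d$-separated with the extra structural control from [9], these differences retain a lower norm bound $d' > 1$ after renormalizing to the sphere — this is exactly the point where the precise statement of Theorem 4.1 of [9] does the work, since naive differencing of a merely $d$-separated weakly Cauchy sequence need not stay separated. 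Once we have a normalized weakly null $d'$-separated sequence with $d'>1$, Theorem 1.7 immediately yields $K_a(X)\geq d' > 1$.

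The main obstacle is precisely the third step: passing from a $(1+\epsilon)$-separated weakly Cauchy sequence to a $(1+\epsilon')$-separated weakly null one without losing the gap above $1$. A generic $(1+\epsilon)$-separated weakly Cauchy sequence, after differencing, can collapse to something with separation only bounded below by a constant that need not exceed $1$. This is why the proof genuinely needs the ``highly non trivial Theorem 4.1 of [9]'' rather than the bare Elton-Odell statement: [9] produces separated sequences together with norming functionals having controlled support or controlled values on the tail, and it is this quantitative control that survives the differencing operation and the renormalization back to $S_X$. One would spell out the bookkeeping: if $f_n(x_n)$ is close to $1$ while $f_n$ is uniformly small on $\{x_m : m \neq n\}$, then $f_{2n}$ evaluated on $\tfrac12(x_{2n}-x_{2n-1})$ is close to $\tfrac12$, while the norm of that difference is at most $1$, so after normalizing we recover a functional witnessing separation close to the original $d$. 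Combining this with the weak nullity of the differences and feeding the result into Theorem 1.7 completes the argument, and since $\epsilon$ (and hence the loss) can be taken arbitrarily small, we conclude $K_a(X)>1$ for every infinite dimensional Banach space $X$.
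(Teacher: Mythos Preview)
Your high-level plan --- exclude $c_0$ and $l_1$ via Proposition 1.2, then manufacture a normalized weakly null $d$-separated sequence with $d>1$ and feed it into Theorem 1.7 --- is exactly the paper's strategy. But the execution is inverted and the crucial step is broken. The paper's order is: first use Rosenthal (no $l_1$) to obtain a normalized \emph{weakly null} basic sequence $(x_n)$; then apply Theorem 4.1 of [9] (no $c_0$) to that weakly null sequence to extract a normalized weakly null block sequence $(y_n)$ whose spreading model $(e_i)$ satisfies $\norm{e_1-e_2}>1$, so that $\lim_{n<m}\norm{y_n-y_m}=\norm{e_1-e_2}>1$; finally invoke Theorem 1.7. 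No differencing or renormalization is ever needed, because weak nullity is secured \emph{before} the separation constant is pushed above $1$, and block sequences of weakly null sequences remain weakly null.

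You instead invoke [9] first to get separation, then Rosenthal to get weakly Cauchy, then difference and renormalize. Two problems. First, you mis-describe Theorem 4.1 of [9]: it is a spreading-model result, not a statement about norming functionals with small tails. Second, and fatally, your differencing computation does not give $d'>1$. With $y_n=\tfrac12(x_{2n}-x_{2n-1})$ and functionals $f_n$ satisfying $f_n(x_n)\approx1$ and $f_n(x_m)\approx0$ for $m\neq n$, you get $f_{2n}(y_n)\approx\tfrac12$ and $f_{2n}(y_m)\approx0$, so $f_{2n}(y_n-y_m)\approx\tfrac12$; since $\norm{y_n}\in(\tfrac12,1]$ (from $\norm{x_{2n}-x_{2n-1}}>1$), after normalizing $z_n=y_n/\norm{y_n}$ you obtain only $f_{2n}(z_n-z_m)\approx\tfrac{1}{2\norm{y_n}}\in[\tfrac12,1)$, never exceeding $1$. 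Biorthogonal-type control yields separation $\geq1$, not $>1$; pushing strictly above $1$ is precisely the content of the spreading-model theorem, and it must be applied \emph{after} weak nullity is already in hand.
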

\begin{proof}
Since $X$ does not contain isomorphicaly $l_1$ by Rosenthal's $l_1-$Theorem [19], we may choose a basic, normalized and 
weakly null sequence $(x_n)$ in $X$. Now $X$ does not contain isomorphicaly $c_0$, so by Theorem 4.1 of [9] there 
exists a normalized weakly null block-basic sequence $(y_n)$ of $(x_n)$ with spreading model $(e_i)$ such that $\norm{e_1-e_2}>1$. 
For the definition of spreading models we refer the reader to [3] and [1]. For our purposes suffices the following property
\[\lim_{\substack{n<m\\n\to\infty}}\norm{y_n-y_m}=\norm{e_1-e_2}>1.\]
\end{proof}
It has been noted that $K_a(X)\leq K(X)$, for any Banach space $X$. It is unknown for us if there exist a Banach space $X$ such that $K_a(X)<K(X)$. 
What we do have is a partial answer in the case $X$ is reflexive, where $K_a(X)=K(X)$.
\begin{thm}
Let $X$ be a reflexive Banach space, then $K_a(X)=K(X)$.
\end{thm}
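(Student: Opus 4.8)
The plan is to derive the nontrivial inequality $K_a(X)\ge K(X)$ from Theorem 1.7 by manufacturing normalized weakly null sequences whose separation constant is arbitrarily close to $K(X)$; the reverse inequality $K_a(X)\le K(X)$ is Remark 1.1(5). Fix $\epsilon\in(0,1)$ and set $d=K(X)-\epsilon$ (we may assume $d>0$ since $K(X)\ge1$). By the definition of $K(X)$ there is an infinite $d$-separated set $\{x_n:n\in\N\}\subseteq B_X$. Since $X$ is reflexive, $B_X$ is weakly compact, so by the Eberlein--\v{S}mulian theorem we may pass to a subsequence, still written $(x_n)$, with $x_n\xrightarrow{w}x$ for some $x\in B_X$; then $(x_n-x)$ is weakly null and still $d$-separated, and, not being norm null, after one more subsequence $\|x_n-x\|\to\rho$ for some $\rho>0$.

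If $x=0$ we finish immediately: after passing to a subsequence, $(x_n/\|x_n\|)$ is normalized, weakly null, and $d'$-separated for some $d'\ge d$ (as $\|x_n\|\le1$), so Theorem 1.7 gives $K_a(X)\ge d'\ge d=K(X)-\epsilon$. In the general case I would run the renorming construction behind Theorem 1.7 --- the method of the proof of Theorem 1 of [17] --- on the bounded weakly null sequence $(x_n-x)$ rather than on a normalized sequence: passing to a further subsequence one obtains an equivalent norm $\|\cdot\|'$ on $X$ with $\frac1{1+\epsilon}\|\cdot\|\le\|\cdot\|'\le\frac1{(1-\epsilon)^2}\|\cdot\|$ and $\|s(x_n-x)+t(x_m-x)\|'=\lim_{k<l,\,k\to\infty}\|s(x_k-x)+t(x_l-x)\|$ for all $n\ne m$ and $s,t\in\mathbb{R}$. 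Taking $s=1$, $t=-1$ this reads $\|x_n-x_m\|'=\lim_{k<l}\|x_k-x_l\|=:\tau$ for all $n\ne m$, and $\tau\ge d$; hence $\{x_n\}$ is a $\tau$-equilateral subset of $(X,\|\cdot\|')$, while the norm comparison forces $\|x_n\|'\le(1-\epsilon)^{-2}$ for all $n$.

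By Remark 1.1(2) this $\tau$-equilateral set is an $(M,1,\tau)$-b.s.a.\ subset of $(X,\|\cdot\|')$ with $M=\sup_n\|x_n\|'\le(1-\epsilon)^{-2}$, so by Remark 1.1(1) the set $\frac1M\{x_n\}$ is a $(1,1,\tau/M)$-b.s.a.\ subset of $(X,\|\cdot\|')$; thus $K_a(X,\|\cdot\|')\ge\tau/M\ge(1-\epsilon)^2(K(X)-\epsilon)$. Transferring this back to $(X,\|\cdot\|)$ through the norm comparison exactly as in the last paragraph of the proof of Theorem 1.7 (rescaling the set by $(1+\epsilon)^{-1}$ and the witnessing functionals by $(1-\epsilon)^2$) produces a $(1,1,\frac{(1-\epsilon)^4}{1+\epsilon}(K(X)-\epsilon))$-b.s.a.\ subset of $X$. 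Hence $K_a(X)\ge\frac{(1-\epsilon)^4}{1+\epsilon}(K(X)-\epsilon)$, and letting $\epsilon\to0^+$ gives $K_a(X)\ge K(X)$; with Remark 1.1(5) this completes the proof.

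The hard part is exactly the case $x\ne0$: one must check that the renorming of [17], stated there for a normalized weakly null sequence, goes through for the merely bounded (and norm-bounded-away-from-zero) weakly null sequence $(x_n-x)$ with the two displayed properties, so that in $\|\cdot\|'$ the $x_n$ are genuinely equilateral while their norms stay within $1\pm O(\epsilon)$. If instead one normalizes first and applies Theorem 1.7 to $(x_n-x)/\|x_n-x\|$, the separation one controls is only $\tau/\rho$, so one would in addition need $\rho=\lim_n\|x_n-x\|\le1$, which is not forced by the elementary estimates (only $\rho\in[1-\|x\|,\,1+\|x\|]$ when $\|x_n\|\equiv1$); so some genuine use of reflexivity is needed here, either to establish $\rho\le1$ or, as above, to run the spreading-model renorming directly on the uncentered sequence.
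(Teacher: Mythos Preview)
Your approach is correct and is essentially the paper's: pass to a weakly convergent subsequence by reflexivity, apply the renorming of [17] to the seminormalized weakly null sequence $y_n=x_n-x_0$, and read off the b.s.a.\ structure. The worry you raise in your final paragraph---that the renorming is only stated for normalized sequences---is exactly the step the paper itself takes without further comment; it simply writes ``As in Theorem 1.7'' and applies the construction to the seminormalized $(y_n)$, so relative to the paper there is nothing additional to justify.

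Where the paper is tighter is that it skips your normalization-and-transfer detour entirely. Once $(y_n)$ is equilateral in $\|\cdot\|'$ with witnessing functionals $f_{nm}\in S_{X^*}^{\|\cdot\|'}$, note that $f_{nm}(x_k)=f_{nm}(y_k)+f_{nm}(x_0)$: a common additive constant preserves both the ordering $f_{nm}(x_m)\le f_{nm}(x_k)\le f_{nm}(x_n)$ and the gap $f_{nm}(x_n)-f_{nm}(x_m)=f_{nm}(y_n)-f_{nm}(y_m)\ge\lambda$. Setting $g_{nm}=(1-\epsilon)^2 f_{nm}$ gives $\|g_{nm}\|\le1$ in the \emph{original} norm, while $\|x_n\|\le1$ in the original norm was the starting hypothesis. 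Hence $\{x_n\}$ is directly a $(1,1,(1-\epsilon)^2\lambda)$-b.s.a.\ subset of $(X,\|\cdot\|)$---no bound on $\|x_n\|'$, no rescaling by $M$, no transfer of the set back, no extra factors $(1+\epsilon)^{-1}$ or $(1-\epsilon)^2$. Your route reaches the same limit as $\epsilon\to0$, but this translation-invariance observation is what makes the paper's argument one step shorter and renders your third paragraph unnecessary.
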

\begin{proof}
Let $0<\lambda<K(X)$ and $(x_n)$ a normalized $\lambda-$separated sequence in $X$. By the reflexivity of $X$ we may assume 
that $x_n\xlongrightarrow{w}x_0$, for some $x_0\in B_X$. We consider now the seminormalized, 
weakly null and $\lambda-$separated sequence $(y_n)$ with $y_n=x_n-x_0$, $n\in\N$. As in Theorem 1.7 for every $\epsilon\in(0,1)$ 
there exists an equivalent norm $\norm{\cdot}'$ in $X$ such that,
\begin{gather*}
\frac{1}{1+\epsilon}\norm{x}\leq\norm{x}'\leq\frac{1}{(1-\epsilon)^2}\norm{x}\text{, for }x\in X,\\
\norm{sy_n+ty_m}'=\lim_{k<l}\norm{sy_k+ty_l}\text{, for }n\neq m,s,t\in\mathbb{R}\text{ and}\\
\frac{1}{1+\epsilon}\norm{x^*}'\leq\norm{x^*}\leq\frac{1}{(1-\epsilon)^2}\norm{x^*}'\text{, for }x^*\in X^*.
\end{gather*}
Thus the sequence $(y_n)$ is $\norm{\cdot}'-$equilateral, so the set $\{y_n:n\in\N\}$ is a $(c_1,1,\lambda)-$b.s.a subset in $\norm{\cdot}'$, 
with $c_1=\sup\{\norm{y_n}:n\in\N\}$. For $n\neq m\in\N$ we consider $f_{nm}\in B_{X^*}^{\norm{\cdot}'}$ such 
that $f_{nm}(y_m)\leq f_{nm}(y_k)\leq f_{nm}(y_n)$, for every $k\in\N$ and $f_{nm}(y_n)-f_{nm}(y_m)\geq\lambda$. Further we 
put $g_{nm}=(1-\epsilon)^2f_{nm}$, for $n\neq m\in\N$. Then for $n\neq m\in\N$ we have
\leqnomode
\begin{gather}
\norm{g_{nm}}\leq1\\
\begin{align}
g_{nm}(y_m)\leq g_{nm}(y_k)&\leq g_{nm}(y_n)\text{, for every }k\in\N\text{ and}\\\notag
g_{nm}(x_n-x_m)&=g_{nm}((x_n-x_0)-(x_m-x_0))\\\notag
&=g_{nm}(y_n-y_m)\\\notag
&=(1-\epsilon)^2f_{nm}(y_n-y_m)\\\notag
&\geq(1-\epsilon)^2\lambda.
\end{align}
\end{gather}
\reqnomode
Thus $K_a(X)\geq(1-\epsilon)^2\lambda$, for every $0<\epsilon<1$ and $\lambda<K(X)$, and cosequently $K_a(X)=K(X)$.
\end{proof}
\begin{rmrk}
Let us summarize some of the current knowledge known results concerning the parameters $K_a(X)$ and $K(X)$. Let X be a Banach space.
\begin{enumerate}[label=(\arabic*)]
\item$K_a(X)\leq K(X)\leq2$.
\item$K(Y)\leq K(X)$ and $K_a(Y)\leq K_a(X)$, where $Y$ is an infinite dimensional subspace of $X$.
\item$1<K(X)$ ([8]).
\item$\sqrt[5]{4}\leq K(X)$, if $X$ is non reflexive space ([13]).
\item$1<K_a(X)$ (Theorem 1.10).
\item$K_a(X)=K(X)$, if $X$ is reflexive (Theorem 1.11).

Particularly, $K_a(X)=K(X)\geq2^{1/p}$ if $X\cong l_p$, $1<p<\infty$ and $K_a(X)=K(X)=2$ if $X\cong c_0$ or $l_1$ 
(Proposition 1.2). Note that the case $1<p<\infty$ of Proposition 1.2 is an easy concequence of Theorem 3 of [12] and of Theorem 1.11.
\end{enumerate}
\end{rmrk}
Let $X$ be a Banach space. With $[X]$ we denote the class of Banach 
spaces $Y$ such that $X\cong Y$ and we define a pseudometric $D$ on $[X]$ in the following way:
\[D(X,Y)=\inf\{\log\norm{T}\norm{T^{-1}}:X\cong^TY\}.\]
In [12] Kottman defines for each Banach space $X$ the set
\[\overline{K(X)}=\{K(Y):Y\cong X\}\]
and proves (Theorem 7) that there exists $b\in[1,2]$ such that $\overline{K(X)}=(b,2]$ or $[b,2]$. The same way we define the set
\[\overline{K_a(X)}=\{K_a(Y):Y\cong X\}\]
and we prove a similar result.
\begin{prop}
For each Banach space $X$ there exists $b\in[1,2]$ such that $\overline{K_a(X)}=(b,2]$ or $[b,2]$.
\end{prop}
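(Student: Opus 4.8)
The plan is to follow the scheme of Kottman's Theorem~7 for $K(X)$, with the pseudometric $D$ on $[X]$ playing the central role. Put $b=\inf\overline{K_a(X)}$. By Remark~1.1(5) and Remark~1.12(1) every $Y\in[X]$ satisfies $1\le K_a(Y)\le 2$, so $b\in[1,2]$ and $\overline{K_a(X)}\subseteq[b,2]$; hence it suffices to prove $(b,2]\subseteq\overline{K_a(X)}$. For this I would (i) exhibit an equivalent norm on $X$ with $K_a=2$, (ii) show that $\log K_a$ is $1$-Lipschitz as a function on $([X],D)$, and (iii) connect a norm with $K_a$ close to $b$ to the norm from (i) by a $D$-continuous path and apply the intermediate value theorem.

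For (i), let $\{(e_n,e_n^*):n\in\N\}$ be an infinite Auerbach system in $X$ (Remark~1.1(5)) and set
\[\norm{x}'=\max\Big\{\norm{x},\ \sup_{k\neq l}|e_k^*(x)-e_l^*(x)|\Big\},\qquad x\in X.\]
Since $|e_k^*(x)-e_l^*(x)|\le 2\norm{x}$ we get $\norm{x}\le\norm{x}'\le 2\norm{x}$, so $\norm{\cdot}'$ is an equivalent norm. A direct computation with the biorthogonality relations gives $\norm{e_n}'=1$ for all $n$, while for $p\neq q$ the supremum defining $\norm{e_p-e_q}'$ is attained at $k=p$, $l=q$ and equals $2$. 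Thus $\{e_n:n\in\N\}$ is an infinite $2$-equilateral subset of the unit sphere of $(X,\norm{\cdot}')$, hence by Remark~1.1(2) a $(1,1,2)$-b.s.a.\ set, so $K_a(X,\norm{\cdot}')=2$ (using $K_a\le 2$). In particular $2\in\overline{K_a(X)}$, which is also the auxiliary statement announced in Remark~1.3(2).

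For (ii), I would prove that whenever $T\colon Y_1\to Y_2$ is an isomorphism one has $\norm{T}\,\norm{T^{-1}}\,K_a(Y_2)\ge K_a(Y_1)$: given an infinite $(1,1,d)$-b.s.a.\ set $S\subseteq B_{Y_1}$ with separating functionals $f_{xy}\in B_{Y_1^*}$, the set $\norm{T}^{-1}T(S)\subseteq B_{Y_2}$ is $(1,1,d/(\norm{T}\norm{T^{-1}}))$-b.s.a., the functional $\norm{T^{-1}}^{-1}\,f_{xy}\circ T^{-1}\in B_{Y_2^*}$ separating $\norm{T}^{-1}Tx$ and $\norm{T}^{-1}Ty$ with the required gap, and the chain $f_{xy}(x)\le f_{xy}(z)\le f_{xy}(y)$ surviving composition with the linear map $T^{-1}$ together with the rescaling. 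Passing to the infimum over $T$ and using symmetry yields $|\log K_a(Y_1)-\log K_a(Y_2)|\le D(Y_1,Y_2)$, so $K_a$ is continuous along $D$-continuous paths. Now fix $d\in(b,2)$; since $b=\inf\overline{K_a(X)}<d$, pick an equivalent norm $\norm{\cdot}_0$ on $X$ with $d_0:=K_a(X,\norm{\cdot}_0)<d$, let $\norm{\cdot}_1:=\norm{\cdot}'$, and put $\norm{x}_t=(1-t)\norm{x}_0+t\norm{x}_1$ for $t\in[0,1]$. Each $\norm{\cdot}_t$ is an equivalent norm on $X$, and comparing $\norm{\cdot}_s$ with $\norm{\cdot}_t$ through the identity map shows their mutual ratios tend to $1$ as $s\to t$, so $t\mapsto\norm{\cdot}_t$ is $D$-continuous. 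Hence $g(t)=K_a(X,\norm{\cdot}_t)$ is continuous with $g(0)=d_0<d<2=g(1)$, and the intermediate value theorem produces $t^*$ with $K_a(X,\norm{\cdot}_{t^*})=d$, i.e.\ $d\in\overline{K_a(X)}$. Combined with $2\in\overline{K_a(X)}$ this gives $(b,2]\subseteq\overline{K_a(X)}\subseteq[b,2]$, so $\overline{K_a(X)}$ equals $[b,2]$ or $(b,2]$ according to whether the infimum $b$ is attained.

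The computation in (i) and the ratio estimate giving $D$-continuity of the path $t\mapsto\norm{\cdot}_t$ are routine. The step I expect to require the most care is (ii): one must check that the full antipodal structure of a b.s.a.\ set --- not merely the separation constant, but the monotonicity conditions $f(x)\le f(z)\le f(y)$ for every $z\in S$ --- is transported by an isomorphism with no loss beyond the factor $\norm{T}\,\norm{T^{-1}}$, which is precisely what dictates the normalization $\norm{T}^{-1}T(S)$ of the set and the compensating rescaling $\norm{T^{-1}}^{-1}$ of the separating functionals.
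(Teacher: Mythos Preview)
Your proof is correct and follows the same three-step scheme as the paper: (i) produce an equivalent norm with $K_a=2$, (ii) establish continuity of $K_a$ on $([X],D)$, and (iii) conclude that the image is an interval with right endpoint $2$. The executions differ in each step. For (i) the paper takes the Minkowski functional of $\conv\bigl(B_X\cup\{\pm 2x_i:i\in\N\}\bigr)$ to obtain a $2$-equilateral set $\{\pm 2x_i\}$, whereas your max-type norm $\max\{\norm{x},\sup_{k\neq l}|e_k^*(x)-e_l^*(x)|\}$ makes the Auerbach vectors themselves $2$-equilateral; both work. For (ii) the paper gives a direct $\epsilon$--$\delta$ continuity argument (which, incidentally, invokes Theorem~1.10 to guarantee $K_a(Y)>1$ when choosing $\delta$), while your $1$-Lipschitz estimate $|\log K_a(Y_1)-\log K_a(Y_2)|\le D(Y_1,Y_2)$ is a bit cleaner and does not need Theorem~1.10. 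For (iii) the paper cites Kottman's result that $([X],D)$ is connected and uses that the continuous image of a connected space is an interval; you instead build an explicit $D$-continuous path $\norm{\cdot}_t=(1-t)\norm{\cdot}_0+t\norm{\cdot}_1$ between two equivalent norms and apply the intermediate value theorem, which is more self-contained (it is essentially a proof of path-connectedness of $([X],D)$). The net effect is the same result via a slightly more elementary route.
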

\begin{proof}
Firstly we will show that $\max\overline{K_a(X)}=2$. This can be done the same way that Kottman proved in Theorem 7 of [12] that $\max\overline{K(X)}=2$
(essentially Kottman proves that $\max\overline{K_a(X)}=2$). We briefly describe his argument. We consider 
an Auerbach system $\{(x_i,x_i^*):i\in\N\}$ in $X$ and put $V=\conv\{B_X\cup\{\pm2x_i:i\in\N\}\}$. Further we consider the Minkowski 
functional of $V$, which here is an equivalent norm, $\norm{\cdot}'$, on $X$ such that
\[\norm{x}\leq2\norm{x}'\leq2\norm{x},\ x\in X\]
(See also the proof of Theorem 3 of [17]). Now it is not difficult to prove 
that the set $\{\pm2x_i:i\in\N\}$ is a normalized 2-equilateral in the norm $\norm{\cdot}'$. Therefore $K_a(\norm{\cdot}')=2$.

We know that the topological space $([X],D)$ is connected ([12]), consequently it suffices 
to show that the function $K_a:[X]\to\mathbb{R}$ is continuous. Let $Y\in[X]$ and $\epsilon>0$. We choose $\delta>1$ such that
\begin{equation}
2-\frac{2}{\delta}<\epsilon\text{ and }\delta^2<K_a(Y)\ (K_a(Y)>1\text{, Theorem 1.10}).
\end{equation}
Let also $Z\in[X]$ with $D(Y,Z)\leq\log\delta$, then there exists a linear operator $T:Y\to Z$ with $\norm{y}_Y\leq\norm{T(y)}_Z\leq\delta\norm{y}_Y$, 
for every $y\in Y$ (so we have in addition that $\norm{T^{-1}}\leq1$). 
We consider an arbitrary $s$ with $\delta<s<K_a(Y)$, $\{y_i:i\in\N\}$ a $(1,1,s)-$b.s.a. subset 
of $Y$ and $f_{ij}\in B_{Y^*}$ such that $f_{ij}(y_j)\leq f_{ij}(y_k)\leq f_{ij}(y_i)$ and $f_{ij}(y_i)-f_{ij}(y_j)\geq s$, 
for every $i\neq j\in\N$ and $k\in\N$. Now we put
\[z_i=\frac{T(y_i)}{\delta}\text{ and }g_{ij}=f_{ij}\circ T^{-1}\text{, for }i\neq j\in\N.\]
Then $\norm{z_i}\leq1$ and $\norm{g_{ij}}\leq\norm{f_{ij}}\norm{T^{-1}}\leq1$, for $i\neq j\in\N$. Also if $k\in\N$
\begin{align*}
g_{ij}(z_j)&=\pn{f_{ij}\circ T^{-1}}\pn{\frac{T(y_j)}{\delta}}=f_{ij}\pn{\frac{y_j}{\delta}}\leq f_{ij}\pn{\frac{y_k}{\delta}}\\
&=g_{ij}(z_k)\leq f_{ij}\pn{\frac{y_i}{\delta}}=g_{ij}(z_i)
\end{align*}
and
\begin{align*}
g_{ij}(z_i-z_j)&=\pn{f_{ij}\circ T^{-1}}\pn{\frac{T(y_i)}{\delta}-\frac{T(y_j)}{\delta}}=\frac{1}{\delta}\pn{f\circ T^{-1}}\pn{T(y_i)-T(y_j)}\\
&=\frac{1}{\delta}f_{ij}(y_i-y_j)\geq\frac{s}{\delta}>1.
\end{align*}
Since $s$ was an arbitrary point of $\pn{\delta,K_a(Y)}$ we get that $K_a(Z)\geq\frac{K_a(Y)}{\delta}$. We have that
\[K_a(Y)-K_a(Z)\leq K_a(Y)-\frac{K_a(Y)}{\delta}\leq2-\frac{2}{\delta}<\epsilon\text{ (by 1.9)}.\]
Again by (1.9) $K_a(Y)\geq\delta^2$, so $K_a(Z)\geq\frac{K_a(Y)}{\delta}>\delta$. By the last inequality we can repeat our proof exchanging the 
roles of $Y$ and $Z$, to take $K_a(Z)-K_a(Y)<\epsilon$ thus $|K_a(Z)-K_a(Y)|<\epsilon$, so we are finished.
\end{proof}
\begin{rmrk}
Let $X$ be a Banach space.
\begin{enumerate}[label=(\arabic*)]
\item Let $Y$ be an infinite dimensional closed subspace of $X$. Since the sets $\overline{K(X)}$ and $\overline{K_a(X)}$ are 
intervals of the form $(b,2]$ or $[b,2]$ for some $b\in[1,2]$, by (2) of Remark 1.12 we take that
\[\overline{K(X)}\subseteq\overline{K(Y)}\text{ and }\overline{K_a(X)}\subseteq\overline{K_a(Y)}.\]
\item Proposition 1.2, Remark 1.3 and Remark 1.12 yield that:
\begin{enumerate}[label=(\alph*)]
\item If $X\cong l_p$, $1<p<\infty$, then $\overline{K_a(X)}=\overline{K(X)}=[2^{1/p},2]$.
\item If $X\cong l_1$ or $X\cong c_0$, then $\overline{K_a(X)}=\overline{K(X)}=\{2\}$.
\item If $X$ is reflexive, then $\overline{K_a(X)}=\overline{K(X)}$.
\item If $X$ is non reflexive, then $\inf\overline{K(X)}\geq\sqrt[5]{4}$ ([13]).
\item Elton-Odell Theorem ([8]) and Theorem 1.10 yield that
\[\overline{K(X)}\subseteq(1,2]\text{ and }\overline{K_a(X)}\subseteq(1,2].\]
\end{enumerate}
\end{enumerate}
\end{rmrk}
We note that the following question seems to be open:
\begin{adjustwidth}{1cm}{0cm}
Does there exist a Banach space $X$ such that $\overline{K(X)}=(1,2]$?

Should there exists such a space, it must be a space that is reflexive and does not contain isomorphicaly 
any of the spaces $l_p$, ( Remark 1.14 (2) (a) and (d)). So such a space would have the properties of the 
space of Tsirelson.
\end{adjustwidth}
The study of bounded and separated antipodal sets with constants $c_1=c_2=1$ and $d>1$ is also interesting 
in finite dimensional spaces. In this case we would be interested about the cardinality of such 
sets. We note that the cardinality of a $(1,1,d>1)-$b.s.a set in a finite dimensional space $\normed$ is also 
finite. By a result of Danzer and Gr\"unbaum [5] the maximum cardinality of an antipodal set in $\mathbb{R}^n$ is $2^n$, so the 
cardinality of a $(1,1,d>1)-$b.s.a set in $\normed$, where $\dim X=n$, cannot exceed $2^n$. On the other hand a $(1,1,d>1)-$b.s.a set in $X$
is $d-$separated, thus known estimations of the cardinality of normalized $d-$separated subsets of $X$ with $d>1$, may play some role 
see [2] and [10]). In Proposition 1.6 we proved that that if $X$ is a $n-$dimensional smooth Banach space then $X$ contains 
a $(1,1,d>1)-$b.s.a set of cardinality $2n$. Note in this connection that results on $(1,1,d>1)-$b.s.a sets will be appeared elsewhere.
\section*{References}
\begin{enumerate}[label={[\arabic*]}]
\item F. Albiac and N. J. Kalton, Topics in Banach space theory, Graduate Texts in Mathematics, vol. 233, Springer, New York, 2006.
\item L. Arias-de-Reyna, K. Ball and R. Villa, Concentration of the distance in finite dimensional normed spaces. Mathematika 45 (1998), 245-252.
\item A. Brunel and L. Sucheston, On B convex Banach spaces, Math. Syst. Theory 7 (1979), 294-299.
\item J. A. C. Burlak, R. A. Rankin and A. P. Robertson, The packing of spheres in the space $l_p$ , Proc. Glasgow Math. Ass. 4 (1958), 22-25.
\item L. Danzer and B. Gr\"unbaum \"Uber zwei Probleme bez\"uglich konvexer K\"orper von P. Erd\H{o}s und von V. L. Klee, Math. Z. 79 (1962), 95-99.
\item M. M. Day, On the basis problem in normed spaces, Proc. Amer. Math. Soc. 13 (1962), 655-658.
\item J. Diestel, Sequences and Series in Banach spaces, Graduate Texts in Mathematics, vol. 92, Springer-Verlag, New York, 1984
\item J. Elton and E. Odell, The unit ball of every infinite-dimensional normed linear space contains a $(1+\epsilon)-$separated sequence, Colloq. Math. 44 (1981), no. 1, 105-109.
\item D. Freeman, E. Odell, B. Sari and B. Zheng, On spreading sequences and asymptotic structures, Trans. Amer. Math. Soc. 370 (2018), 6933-6953.
\item E.Glakousakis, S.K. Mercourakis, On the existence of 1-separated sequences on the unit ball of a finite dimensional Banach space, Mathematika, 61 (2015), 547-558.
\item P. Hajek, V. Montesinos Santalucia, J. Vanderwerff and V. Zizler, Biorthogonal systems in Banach spaces, CMS Books in Mathematics, vol. 26, Springer, New York, 2008.
\item C. Kottman, Subsets of the unit ball that are separated by more than one, Studia Mathematika 53 (1975), 15-27.
\item A. Kryczka and S. Prus, Separated sequences in nonreflexive Banach spaces, Proc. Amer. Soc. 129 (2001), 155-163.
\item J. Lindenstrauss and L. Tzafriri, Classical Banach spaces I, Springer-Verlag, Berlin, 1977.
\item E. Maluta and P. L. Papini, Estimates for Kottman's separation constant in reflexive Banach spaces, Colloq. Math. 117 (2009), 105-119.
\item R. E. Megginson, An introduction to Banach Space Theory, Graduate Texts in Mathematics, vol.183, Springer, New York, 1998.
\item S. K. Mercourakis and G. Vassiliadis, Equilateral sets in infinite dimensional Banach spaces, Proc. Amer. Math. Soc. 142, (2014), 205-212.
\item C. M. Petty, Equilateral sets in Minkowski spaces, Proc. Amer. Math. Soc. 29 (1971), 369-374.
\item H.P.Rosenthal, A characterization of Banach spaces containing $l_1$, Proc. Nat. Acad. Sci. U.S.A 71 (1974), 2411-2413.
\end{enumerate}
\vspace{0.5cm}
NATIONAL AND KAPODISTRIAN UNIVERSITY OF ATHENS, \\DEPARTMENT OF MATHEMATICS, \\PANEPISTIMIOUPOLIS, 15784 ATHENS, GREECE\\
\textit{E-mail address: e.glakousakis@gmail.com}\\ \\ \\
NATIONAL AND KAPODISTRIAN UNIVERSITY OF ATHENS, \\DEPARTMENT OF MATHEMATICS, \\PANEPISTIMIOUPOLIS, 15784 ATHENS, GREECE\\
\textit{E-mail address: smercour@math.uoa.gr}
\end{document}